\renewenvironment{proof}[1][Proof]{\noindent\textit{#1. } }{\hfill$\square$}
 \newtheoremstyle{theorem}{6pt}{6pt}{\rm}{}{\sffamily}{ }{ }{}
 \theoremstyle{theorem}
\newtheorem{theorem}{\sc Theorem}[section]
 \newtheoremstyle{algorithm}{6pt}{6pt}{\rm}{}{\sffamily}{ }{ }{}
 \theoremstyle{algorithm}
 \newtheoremstyle{lemma}{6pt}{6pt}{\rm}{}{\sffamily}{ }{ }{}
 \theoremstyle{lemma}
 \newtheorem{lemma}{\sc Lemma}[section]
\newtheoremstyle{case}{6pt}{6pt}{\rm}{}{\sffamily}{. }{ }{}
 \theoremstyle{case}
 \newtheoremstyle{statement}{6pt}{6pt}{\rm}{}{\sffamily}{ }{ }{}
\theoremstyle{statement}
 \newtheoremstyle{corollary}{6pt}{6pt}{\rm}{}{\sffamily}{ }{ }{}
 \theoremstyle{corollary}
  \newtheoremstyle{definition}{6pt}{6pt}{\rm}{}{\sffamily}{ }{ }{}
 \theoremstyle{definition}
\newtheoremstyle{example}{6pt}{6pt}{\rm}{}{\sffamily}{ }{ }{}
\theoremstyle{example}
\newtheoremstyle{remark}{6pt}{6pt}{\rm}{}{\sffamily}{ }{ }{}
\theoremstyle{remark}
\newtheorem{remark}{\sc Remark}[section]
\newtheoremstyle{approximation}{6pt}{6pt}{\rm}{}{\sffamily}{ }{ }{}
\theoremstyle{approximation}
\newtheoremstyle{scheme}{6pt}{6pt}{\rm}{}{\sffamily}{ }{ }{}
\theoremstyle{scheme}
\newtheoremstyle{Algorithm}{6pt}{6pt}{\rm}{}{\sffamily}{ }{ }{}
\theoremstyle{Algorithm}
\newtheoremstyle{Assumption}{6pt}{6pt}{\rm}{}{\sffamily}{ }{ }{}
\theoremstyle{Assumption}
\newtheoremstyle{proposition}{6pt}{6pt}{\rm}{}{\sffamily}{ }{ }{}
\theoremstyle{proposition}
\newtheoremstyle{hypo}{6pt}{6pt}{\rm}{}{\sffamily}{ }{ }{}
 \theoremstyle{hypo}
  \newtheoremstyle{Step}{6pt}{6pt}{\rm}{}{}{ }{ }{}
 \theoremstyle{Step}
\numberwithin{equation}{section}
\begin{document}
	\title{A deterministic model for non-monotone relationship between translation of upstream and downstream open reading frames}
	\author{ {\sc 	D. E. Andreev}\\[2pt]
		Moscow State Unversity, Moscow, Russia;\\
		Shemyakin-Ovchinnikov Institute of Bioorganic Chemistry, RAS, Moscow, Russia\\[6pt]
		{\sc 	P. V. Baranov}\\[2pt]
		 University College Cork, Cork, Ireland;\\ Shemyakin-Ovchinnikov Institute of Bioorganic Chemistry, RAS, Moscow, Russia\\[6pt]
		 {\sc 	A. Milogorodskii$^*$ and D. Rachinskii}\\[2pt]
		 University of Texas at Dallas, Richardson, TX, USA\\[6pt]
		 {\rm $^*$ Corresponding author. Email: axm170052@utdallas.edu}\\
		{\rm [Received on 28 December 2020]}\vspace*{6pt}}
	\pagestyle{headings}
	\markboth{D. E. Andreev, P. V. Baranov, A. Milogorodskii and D. Rachinskii}{\rm A deterministic model for relationship between translation of upstream and downstream ORF}
	\maketitle
	
\begin{abstract}
	{TASEP modeling was shown to offer a parsimonious explanation for the experimentally confirmed ability of a single upstream Open Reading Frames (uORFs) to upregulate downstream translation during the integrated stress response. As revealed by numerical simulations, the model predicts that reducing the density of scanning ribosomes upstream of certain uORFs increases the flow of ribosomes downstream. To gain a better insight into the mechanism which ensures the non-monotone relation between the upstream and downstream flows, in this work, we propose a phenomenological deterministic model approximating the TASEP model of the translation process. We establish the existence of a stationary solution featuring the decreasing density along the uORF for the deterministic model. Further, we find an explicit non-monotone relation between the upstream ribosome density and the downstream flow for the stationary solution in the limit of increasing uORF length and increasingly leaky initiation. The stationary distribution of the TASEP model, the stationary solution of the deterministic model and the explicit limit are compared numerically.}
	{mRNA translation regulation, stress resistance, ordinary differential equation model, asymptotics of stationary solution, modified TASEP model.}
\end{abstract}
\section{Introduction}
In eukaryotes, canonical translation initiation begins with the recognition of the m7G cap structure found at the 5’ end of mRNAs. This is followed by forming the preinitiation complex (PIC), which starts to ``scan'' the mRNA, unwinding mRNA secondary structures and probing mRNA for potential sites of translation initiation. After the initiation codon is recognized, the chain of events leads to large ribosome subunit joining and initiation of polypeptide synthesis (for more details see recent reviews \citep{1,2,3,4,5} on the mechanism of translation initiation in eukaryotes and its regulation). Once the ribosomes initiate translation, they continue moving along mRNA elongating the peptide chain (elongating ribosomes).

Not all translation initiation events lead to the synthesis of annotated functional proteins. Many codons recognized as starts of translation occur upstream of the annotated coding open reading frame (acORF), which encodes functional proteins in eukaryotic organisms \citep{6,7,8,9,10,11,12}. Abundant translation initiation upstream of acORFs has been confirmed by several ribosome profiling experiments \citep{19,20,21}. Such translation of an upstream open reading frame (uORF) can regulate eukaryotic gene expression. In particular, ribosome profiling revealed that translation of uORFs is altered in response to changes in physiological conditions. 

A number of stress conditions lead to a global increase in translation of uORFs on mRNA leaders \citep{22,23,24,25}, and 
sometimes reciprocal changes in acORF translation can be observed among individual mRNAs \citep{22}. One of the stress conditions where uORF-mediated translation control seems to be particularly important is in the integrated stress response (ISR) \citep{26,27,28}. 
During ISR the global translation is often down-regulated. This is achieved via molecular events that limit an important initiation factor required for PIC formation, leading to the reduced level of scanning ribosomes at mRNA. However, translation of a small number of mRNAs is resistant to, or upregulated, during ISR, and the most stress resistant mRNAs possess translated uORFs \citep{26,30}. 

The classical mechanism of uORF mediated stress resistance, known as delayed reinitiation, involves several uORFs, and at least two are absolutely essential for stress resistance \citep{32,33,34}. The archetypical example of this mechanism occurring in yeast has been characterized in details (reviewed in \citep{31}). On the other hand, examples have been found where only a single uORF is sufficient for imbuing an mRNA with translational stress resistance \citep{26, 35,36,37,38}, suggesting that there are other mechanisms of upregulating gene expression during stress. For instance, the start codon of the uORF is in a suboptimal Kozak context and allows for leaky scanning \citep{35,39} when only part of the ribosomes are initiated at the uORF start codon, while the others bypass it and begin translation at the start codon of the acORF. It has been hypothesized that the stringency of start codon recognition is increased during particular stress conditions, which allows for more leaky scanning, effectively increasing the flow of scanning ribosomes through the uORF and upregulating translation of the acORF \citep{35,39}.

It is also possible that elongating ribosomes translating the uORF obstruct progression of scanning ribosomes downstream, and this obstruction is relieved during stress due to reduced number of elongating ribosomes (albeit the stringency of the start codon recognition remains unchanged). In this scenario,  the intensities of the flows of scanning ribosomes upstream and downstream the uORF
are negatively correlated due to the obstruction of scanning ribosomes, which may potentially explain how a single uORF mediates stress resistance. However, it is {\em a priori} unclear whether such a mechanism is plausible without additional factors involved because stress reduces the number of both elongating and scanning ribosomes creating counteracting tendencies. While most stress resistant mRNAs possess uORFs, only very few uORF-containing mRNAs are stress resistant \citep{26}. This leads to the question what makes some uORFs to be the providers of stress resistance? In particular, can the interaction of scanning and elongating ribosomes trailing along the uORF explain the upregulation of downstream expression during stress?

The latter question was recently explored using a variant of the Totally Asymmetric Simple Exclusion Process (TASEP) model \citep{40}. Computer simulations of the model predicted that indeed a small subset of specific uORFs (constrained by length and leakiness of their initiation sites) are capable of upregulating translation downstream of a single uORF in response to the reduced availability of ribosomes upstream, which is observed under ISR. 

TASEP is a stochastic dynamical system of unidirectional particle movement through a one dimensional lattice, where each site can be occupied by no more than one particle, and the probability of particle transition from one site to another is predefined. Beginning with \citep{macdonald}, it has provided a basis for most mechanistic studies of mRNA translation dynamics and has been widely used for modeling a variety of dynamical systems including non-equilibrium transport phenomena, biological systems and road traffic \citep{41,42,43,44,45,46,465,47}. In the classical setting, the number of particles is conserved. In this case, important mathematical tools for the analysis of the stationary distribution 
of the TASEP model with the associated phase transitions are provided by the so-called hydrodynamic limit, which takes the form of a parabolic differential equation (for the density of the particles) expressing the conservation law \citep{derrida, blythe}. The hydrodynamic limit has been recently extended to a generalized version of the TASEP model (the $\ell$-TASEP with particles occupying multiple sites on the lattice and inhomogeneous jump rates), which was applied to analyze mRNA translation efficiency \citep{sch, song}. 

The variant of the $\ell$-TASEP model developed in \citep{40} was based on the assumption that collisions of elongating ribosomes with scanning ribosomes can result in the dissociation of the latter from the mRNA. Hence, the conservation is violated.
In this work, we propose a deterministic model of the uORF-mediated stress resistance based on a similar set of assumptions and additional simplifications. This system is less flexible than the probabilistic non-conservative $\ell$-TASEP model, which for example can  account for the fact that ribosomes occupy multiple sites on mRNA \citep{40}. On the other hand, the added value of the deterministic model is that it lends itself to rigorous analysis and admits an explicit solution in limit cases, providing further insight into a plausible mechanism of uORF-mediated stress resistance. Using this deterministic model, we confirm that leaky initiation combined with removal of scanning ribosomes due to collisions with elongating ribosomes can explain the non-monotone relationship between the flows of scanning ribosomes upstream and downstream of the uORF.

The deterministic model relates the rate of change of the average densities of scanning and elongating ribosomes at each mRNA site, $\rho_n^s$ and $\rho_n^e$, respectively, to the densities at neighboring sites. As described in Section \ref{2}, it is assumed that the flow of scanning ribosomes from site $n$ to the next site is proportional to the product of the density $\rho_n^s$  and the probability $1-\rho_{n+1}^s-\rho_{n+1}^e$ that the next site is unoccupied. The exit flow of scanning ribosomes from each site $n$ is assumed proportional to the product of the density $\rho_n^s$ and the density $\rho_{n-1}^e$ of elongating ribosomes at the previous site.

The paper is organized as follows. The next section presents the stochastic TASEP model and the deterministic model of uORF translation. Section \ref{mainn} contains main results. We show that the deterministic model has a stationary solution with certain properties and find this solution in the limit of increasing mRNA length. This allows us to infer conditions that ensure the non-monotone dependence of the downstream flow on the upstream flow of scanning ribosomes. In Section \ref{numerics}, we compare numerically the predictions of the TASEP and deterministic models with the explicit limit solution. Section \ref{proofs} contains the proofs.

\section{Models}\label{2}
\subsection{Modified TASEP model for propagation of ribosomes along uORF}\label{model}
We represent scanning and elongating ribosomes as two separate types of particles with different dynamic properties with the possibility of transformation of one into the other at specific sites. The uORF is modeled as a discrete array of positions numbered from $0$ to $N$. Each position represents one codon. Position $N_1$ corresponds to the start codon, position $N_1+N_2$ corresponds to the stop codon. Particles of the first kind (representing scanning ribosomes) proceed from position $0$ to position $N_1$, where part of them is converted to particles of the second kind (representing elongating ribosomes), see Figure \ref{tasep}. Then, particles of both kinds proceed to the position $N_1+N_2$, where particles of the second kind are eliminated. Particles of the first kind continue to the position $N_*-1$ and are eliminated there. Hence, particles of the second kind can be encountered between the positions $N_1$ and $N_1+N_2$ only.
\begin{figure}[hbt]
	\centering
	\includegraphics[scale=0.55]{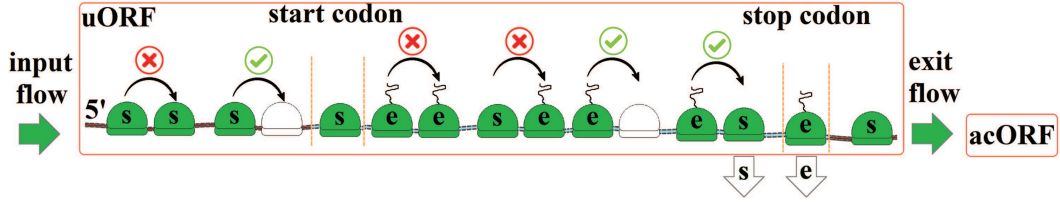}
	\caption{Block scheme of the TASEP model of the uORF translation.}
	\label{tasep}
\end{figure}

As the first model, we consider a modification of the TASEP stochastic process. The state of the model at time $k\in \mathbb{N}_0$ is a vector $s^k=(s_0^k,\ldots,s_{N_*-1}^k)$ with $s^k_i\in \{0,1,2\}$, where $s_i^k=0$ corresponds to the vacant (unoccupied) codon, $s_i^k=1$ corresponds to the codon occupied by a particle of the first kind and $s_i^k=2$ corresponds to the codon occupied by a particle of the second kind at the position $i$. At every time step, a position $n=n_k$ is selected at random with equal probability $1/N_*$. If either $s_{n_k}^k\ne 0$ or $n_k=0$, then the components $s_{n_k}^k$ and $s_{n_k+1}^k$ of the state are changed according to the rules listed in Table \ref{T1} (in the case $n_k=N_*-1$, only $s^k_{N_*-1}$ is changed). The other components remain unchanged, i.e. $s^{k+1}_m=s^k_m$ for $m\ne n_k, n_k+1$. 

In the cases, which are not listed in Table \ref{T1}, the state remains unchanged. In particular, $s^{k+1}=s^k$ if either $s^k_{n_k}=0$ for $n_k\ne 0$ or $(s_{n_k}^k, s_{n_k+1}^k)\in \{(1,1), (1,2), (2,2)\}$.
\vspace*{-7pt}
\begin{table}[!h]
	\caption{Probabilities of transitions $(s^k_{n_k},s^k_{n_k+1})\to (s^{k+1}_{n_k},s^{k+1}_{n_k+1})$}
	\label{T1}
	\begin{tabularx}{\textwidth}{X l}
		\hline
		$n_k\ne 0, N_1-1, N_1+N_2-1, N_*-1:$& \\
		&$(1,0) \to (0,1)$ with probability $1$;\\
		&$(2,0) \to (0,2)$ with probability $1$;\\
		&$(2,1) \to (0,2)$ with probability $1$;\\
		\hline
		$n_k=0:$&  \\ 
		&$(0,0) \to (1,0)$ with probability $\rho^0\in (0,1]$; \\ 
		&$(0,1) \to (0,1)$ with probability $1-\rho^0$; \\ 
		&$(1,0) \to (1,1)$ with probability $\rho^0$; \\ 
		&$(1,0) \to (0,1)$ with probability $1-\rho^0$; \\ 
		\hline
		$n_k=N_1-1:$&  \\ 
		&$(1,0) \to (0,2)$ with probability $c\in (0,1)$;  \\ 
		&$(1,0) \to (0,1)$ with probability $1-c$;  \\ 
		\hline
		$n_k=N_1+N_2-1:$&\\
		&$(1,0)\to (0,1)$ with probability $1$;\\
		&$(2,0) \to (0,0)$ with probability $1$;\\
		&$(2,1)\to (0,1)$ with probability $1$;\\
		\hline
		$n_k=N_*-1:$&\\
		&$s_{N_*-1}^k=1 \to s_{N_*-1}^{k+1}=0$ with probability $1$.\\
		\hline
	\end{tabularx}
	\vspace*{-4pt}
\end{table}

These rules imply that a particle of the first kind, if selected at a particular time step, moves from its position $n_k$ to the next position $n_k+1$ provided that the latter is vacant. A particle of the second kind moves to the next position if this position is either vacant or occupied by a particle of the first kind. In the latter case, the particle of the second kind replaces the particle of the first kind eliminating it from the array. Special rules apply for $n_k=0, N_1-1, N_1+N_2-1, N_*-1$. The initial position $n=0$, if vacant, is refilled by a particle of the first kind with the probability $\rho^0$. A particle of the first kind is converted into a particle of the second kind with the probability $c$ when moved from the position $N_1-1$ to $N_1$. Particles of the second kind are eliminated at the position $N_1+N_2-1$, and particles of the second kind are removed at the position $N_*-1$. The model is initiated with the state $s^0=(1,0,\ldots,0)$.

Results of numerical simulation of this model are presented in Section \ref{numerics}.

\subsection{Deterministic model}
Now, we consider a phenomenological deterministic model, which approximates the above stochastic process.

\begin{figure*}[hbt]
	\centering    	
	\includegraphics[scale=1.1]{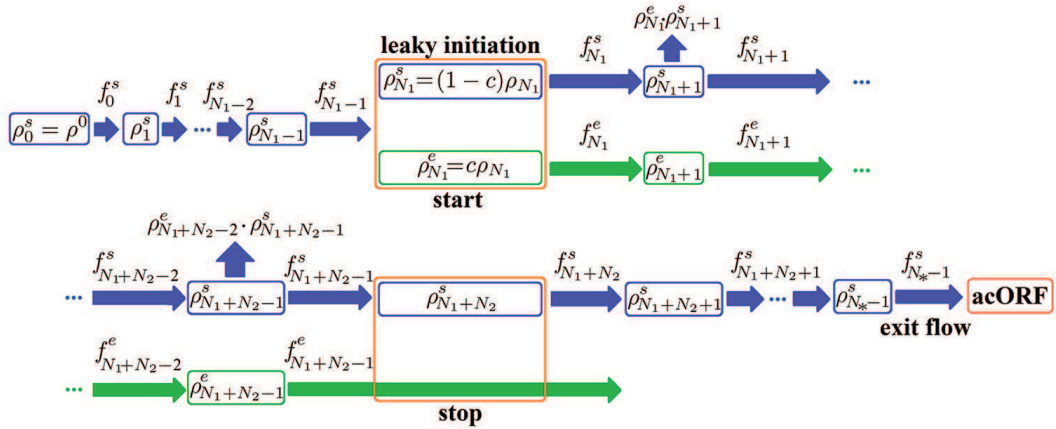}
	\caption{Block scheme of the deterministic model of the uORF translation.}
\end{figure*}

By $\rho_n^s$ and $\rho_n^e$ we denote the densities of particles of the first and second kind, respectively, at position $n$. These densities satisfy $0\le \rho_n^s,\quad \rho_n^e \le \rho_n^s+\rho_n^e\le 1$. The boundary conditions for particles of the first kind are
\begin{equation}\label{bvr}
\rho_0^s=\rho^0, \qquad \rho_{N_*}^s=0.
\end{equation}
Since there are no particles of the second kind outside the segment between the start and stop codons, we require
\begin{equation}\label{bvR}
\rho_0^e=\rho_1^e=\cdots=\rho_{N_1-1}^e=\rho_{N_1+N_2}^e=\rho_{N_1+N_2+1}^e=\cdots=
\rho_{N_*}^e=0.
\end{equation}

It is postulated that the one-directional flow of particles of the first kind from position $n$ to position $n+1$ equals 
\[
f_n^s=\rho_n^s(1-\rho_{n+1}^s-\rho_{n+1}^e), \qquad n=0,1,\ldots,N_*-1;
\]
the flow of particles of the second kind from position $n$ to position $n+1$ is
\[
f_n^e=\rho_n^e(1-\rho_{n+1}^e), \qquad n=0,1,\ldots,N_*-1.
\]
Using these formulas for the flows, the balance equation for particles of the first kind at any position $n\ne N_1$ reads
\begin{equation}
\label{eqr}
\frac{1}{v}\dot{\rho}_n^s=
f_{n-1}^s-f_n^s-\rho_{n}^s\rho_{n-1}^e, \qquad n=1,\ldots,N_*-1; \ \ n\ne N_1,
\end{equation}
where $v$ is the velocity of the particles, and the last term, $-\rho_{n}^s\rho_{n-1}^e$, accounts for collisions of particles of the second kind with particles of the first kind. It is assumed that such a collision results in the elimination of the particle of the first kind. The balance equations for particles of the second kind for $n$ between the start and the stop codon are
\begin{equation}\label{eqR}
\tfrac1v \dot {\rho}_n^e = f_{n-1}^e-f_n^e, \qquad n=N_1+1,\ldots,N_1+N_2-1.
\end{equation}

At $n=N_1$ (the start codon), the balance equation is slightly different:
\begin{equation}\label{eqs}
\dot \rho_{N_1}=f_{N_1-1}^s-f_{N_1}^s-f_{N_1}^e,
\end{equation}
where $\rho_{N_1}=\rho_{N_1}^s+\rho_{N_1}^e$ is the total density of particles at the start codon.

Balance equations \eqref{eqr}\,--\,\eqref{eqs} and boundary conditions \eqref{bvr}\,--\,\eqref{bvR} are coupled with the matching conditions at the start codon:
\begin{equation}\label{mc}
\rho_{N_1}^s =(1-c)\rho_{N_1},\qquad \rho_{N_1}^e = c \rho_{N_1},
\end{equation}
where the parameter $c\in (0,1)$ determines the fraction of particles of the first kind converted to particles of the second kind at the start codon.

\section{Main results}\label{mainn}

\subsection{Existence of a stationary solution}
We consider the stationary solution of problem \eqref{bvr}\,--\,\eqref{mc}. For this solution the flow of particles of the first kind is constant on the segments $0\le n\le N_1-1$ and $N_1+N_2\le n\le N_*-1$ and the flow of particles of the second kind is constant between the start and stop codons:
\begin{equation}\label{fj12}
f_0^s=\cdots=f_{N_1-1}^s=j^{(1)}; \qquad f_{N_1}^e=\cdots= f_{N_1+N_2-1}^e=J^{(2)};
\end{equation}
\begin{equation}\label{fj3}
f_{N_1+N_2}^s=\cdots=f_{N-1}^s=j^{(3)}.
\end{equation}
It will be convenient to slightly change the notation and denote
\[
\rho^{(1)}_n = \rho_n^s, \qquad n=0,\ldots,N_1-1; \qquad \rho^{(1)}_{N_1}=\rho_{N_1};
\]
\[
\rho^{(2)}_m=\rho_{N_1+m}^s, \quad R_m=\rho_{N_1+m}^e, \qquad m=0,\ldots,N_2+N_3,
\]
where $N_3=N_*-N_1-N_2$, as well as to introduce the new variables
\[
j_m^{(2)}=\rho_m^{(2)}(1-\rho_m^{(2)}), \qquad m=0,\ldots,N_2+N_3-1.
\]
That is, $\rho_n^{(1)}$ are densities of particles of the first kind upstream of the start codon, $\rho_m^{(2)}$ are densities of particles of the first kind downstream of the start codon and $R_m$ are densities of particles of the second kind downstream of the start codon. Note that $j_m^{(2)}$ is not equal to the flow $f_{N_1+m}^s$.

With this notation, the stationary solution satisfies the system of equations
\begin{equation}\label{problem}
\rho_{n+1}^{(1)}=1-\frac{j^{(1)}}{\rho_n^{(1)}}, \qquad n=0,\ldots, N_1-1;
\end{equation}
\begin{equation}\label{problem'}
R_{m+1}=
\begin{cases}
1-\frac{J^{(2)}}{R_m}, &\qquad m=0,\ldots, N_2-1;\\
R_m, &\qquad m=N_2,\ldots, N_2+N_3-1;
\end{cases}
\end{equation}
\begin{equation}\label{problem''}
\begin{cases}
\rho_{m+1}^{(2)}=1-\frac{j_m^{(2)}}{\rho_m^{(2)}}, &\quad m=0,\ldots,N_2+N_3-1;\\
j_{m+1}^{(2)}=j_m^{(2)}-R_{m+1}\rho_m^{(2)}-(R_{m}-R_{m+2})\rho^{(2)}_{m+1},
&\quad m=0,\ldots,N_2+N_3-2,
\end{cases}
\end{equation}
coupled with the boundary and matching conditions
\begin{equation}\label{matching}
\begin{aligned}[c]
\rho_0^{(1)}=\rho^0;\quad \ R_0={c}\rho_{N_1}^{(1)};\quad \ 
\rho_0^{(2)}=\left(1-{c}\right)\rho_{N_1}^{(1)};
\\
j^{(1)}=j_0^{(2)}+J^{(2)}-\rho_0^{(2)}R_1; \quad \ R_{N_2}=0;\quad \ 
\rho_{N_2+N_3}^{(2)}=0.
\end{aligned}
\end{equation}

A solution of problem \eqref{problem}\,--\,\eqref{matching} is called {\em positive} if all its unknown components are positive:

\begin{equation}\label{solution}
\rho_0^{(1)},\ldots, \rho_{N_1}^{(1)}, \rho_0^{(2)},\ldots, 
\rho_{N_2+N_3-1}^{(2)}, R_0,\ldots,R_{N_2-1}, j^{(1)}, j^{(2)}_0,\ldots, 
j^{(2)}_{N_2+N_3-1}, J^{(2)}>0.
\end{equation}
A solution is called {\em decreasing} if
\[
\rho^{(1)}_0 > \cdots > \rho^{(1)}_{N_1} > \rho^{(2)}_0 > \cdots > 
\rho^{(2)}_{N_2+N_3} =0; \qquad R_0 > \cdots > R_{N_2}=0.
\]

\begin{theorem}\label{t1}
	Given any $\rho^0\in (0,1/2]$, $c\in (0,1)$, problem 
	\eqref{problem}\,--\,\eqref{matching} has a positive decreasing solution.
\end{theorem}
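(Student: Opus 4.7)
The system \eqref{problem}--\eqref{matching} decomposes into three sub-recursions coupled only through the boundary data and the matching conditions of \eqref{matching}. My plan is a shooting argument in the flow parameters $j^{(1)}$ and $J^{(2)}$: solve each sub-region as an initial- or terminal-value problem parameterized by these flows, then enforce the matching conditions, reducing existence of the desired stationary solution to a one-parameter intermediate-value argument.

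For the upstream block \eqref{problem}, the iteration $\rho\mapsto 1-j^{(1)}/\rho$ starting from $\rho_0^{(1)}=\rho^0\in(0,1/2]$ is a Moebius map whose qualitative behaviour is controlled by the roots of $\rho^2-\rho+j^{(1)}=0$. I would identify the interval $\mathcal{J}_1$ of values of $j^{(1)}$ for which the forward iterates $\rho_0^{(1)},\ldots,\rho_{N_1}^{(1)}$ are all positive and strictly decreasing, and show that the map $j^{(1)}\mapsto\rho_{N_1}^{(1)}$ is continuous and strictly monotone on $\mathcal{J}_1$, sweeping a full subinterval of $(0,\rho^0)$. The $R$-block \eqref{problem'} is treated analogously by running the Moebius recursion backward from $R_{N_2}=0$ (which forces $R_{N_2-1}=J^{(2)}$); for $J^{(2)}$ in a suitable interval this yields a strictly positive sequence $R_0>\cdots>R_{N_2-1}>0$ with $R_0$ a continuous monotone function of $J^{(2)}$. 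The matching condition $R_0=c\rho_{N_1}^{(1)}$ then defines an implicit continuous relation that eliminates $J^{(2)}$ in favour of $j^{(1)}$.

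The downstream coupled block \eqref{problem''} is the crux. With initial data $\rho_0^{(2)}=(1-c)\rho_{N_1}^{(1)}$ and $j_0^{(2)}=j^{(1)}-J^{(2)}+\rho_0^{(2)}R_1$ fixed by the matching conditions and with the already-computed sequence $R_m$ as an input, I would iterate the coupled pair $(\rho_m^{(2)},j_m^{(2)})$ forward. A key structural observation is that the increment $j_{m+1}^{(2)}-j_m^{(2)}=-R_{m+1}\rho_m^{(2)}-(R_m-R_{m+2})\rho_{m+1}^{(2)}$ is nonpositive whenever the $R_m$ are decreasing and the $\rho^{(2)}_m$ are nonnegative, so $j_m^{(2)}$ is nonincreasing along the iteration. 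For $m\ge N_2$ the $R_m$ vanish, the iteration degenerates to a constant-flow Moebius recursion with flow $j_{N_2}^{(2)}$, and the terminal requirement $\rho^{(2)}_{N_2+N_3}=0$ reduces to a single scalar equation $\Phi(j^{(1)})=0$ in the remaining free parameter. The proof would conclude by verifying continuity of $\Phi$ on $\mathcal{J}_1$ and applying the intermediate value theorem after exhibiting values of $\Phi$ of opposite sign at the two endpoints of $\mathcal{J}_1$.

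The main obstacle will be the downstream block: maintaining positivity of $(\rho_m^{(2)},j_m^{(2)})$ together with the strict decrease $\rho^{(2)}_0>\cdots>\rho^{(2)}_{N_2+N_3}=0$ along the coupled iteration, and producing the sign change of $\Phi$. I would handle positivity and strict decrease by an induction on $m$ that propagates the bracket $\rho_m^{(2)}(1-\rho_m^{(2)})<j_m^{(2)}<\rho_m^{(2)}$, whose first inequality gives $\rho_{m+1}^{(2)}<\rho_m^{(2)}$ and whose second gives $\rho_{m+1}^{(2)}>0$. The sign change of $\Phi$ would be extracted from a degenerate-limit analysis at the endpoints of $\mathcal{J}_1$ (for example, where $\rho_{N_1}^{(1)}$ approaches $\rho^0$ versus where it approaches $0$), exploiting that at one extreme the downstream flow is too small to sustain the required $N_2+N_3$ steps while at the other it is too large, so a zero of $\Phi$ must lie in between.
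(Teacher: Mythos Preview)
Your overall shooting framework matches the paper's: eliminate $J^{(2)}$ through the terminal condition $R_{N_2}=0$ (the paper does this forward via a function $\psi_{N_2}$ with $\psi_{N_2}(R_0)=J^{(2)}$ rather than backward, but it is the same), leaving the single free parameter $j^{(1)}$. The gap is in your treatment of the coupled downstream block. The upper bracket $j_m^{(2)}<\rho_m^{(2)}$ is \emph{equivalent} to $\rho_{m+1}^{(2)}>0$; it is not an invariant of the forward iteration and cannot be propagated by induction on $m$. Already in the post-stop regime $R\equiv 0$ the system reduces to the constant-flow M\"obius map, which for $j>\varphi(\rho)$ drives the iterates to zero and then negative in finitely many steps. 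Hence for a generic $j^{(1)}\in\mathcal{J}_1$ the bracket fails at some $m<N_2+N_3$, your terminal function $\Phi$ is defined only on an a~priori unknown subset of $\mathcal{J}_1$, and the one-shot IVT has no domain on which to operate. Your endpoint heuristic does not help: at $j^{(1)}\to\psi_{N_1}(\rho^0)-$ one has $\rho^{(2)}_1\to-\infty$, and nothing in your outline rules out an intermediate crash at the other endpoint $j^{(1)}\to\varphi(\rho^0)+$ either.

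The paper closes this gap with a monotone-dependence lemma for system \eqref{problem''}: lowering $j_0^{(2)}$ while raising $\rho_0^{(2)}$ and all $R_m$, $R_m-R_{m+2}$ raises every $\rho_m^{(2)}$ and lowers every $j_m^{(2)}$ (as long as positivity persists). Decreasing $j^{(1)}$ moves the initial data in exactly this direction. The proof then inducts not on $m$ but on the \emph{crash index} $k$: if for $j^{(1)}=A_k$ one has $\rho^{(2)}_0>\cdots>\rho^{(2)}_k>0>\rho^{(2)}_{k+1}$ with $j^{(2)}_k>0$, then as $j^{(1)}$ decreases continuously $\rho^{(2)}_{k+1}$ rises monotonically through $0$ and toward $1$ (since eventually $j^{(2)}_k\to 0+$), so an intermediate value yields $A_{k+1}\in(0,A_k)$ with the crash pushed to step $k+1$. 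Starting from your own base case near $\psi_{N_1}(\rho^0)$ and iterating to $k=N_2+N_3-1$, one final continuity step gives $\rho^{(2)}_{N_2+N_3}=0$. The monotonicity lemma, together with this creeping induction on $k$, is the missing idea in your sketch.
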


This theorem is proved in the next section.

\subsection{Continuous limit of the stationary solution}

Denote by $W: [-e^{-1},\infty) \to [-1,\infty)$ the Lambert $W$-function, which is the inverse of the function $W^{-1}(w)=w e^w$.
Denote
\begin{equation}\label{varphi}
\varphi(\rho)=\rho(1-\rho),
\end{equation}
\begin{equation}\label{W'}
\rho_*(\rho^0,\tau)=-\frac12 W\left(-2\rho^0 e^ {-\rho^0\left(2+{c_0}\tau\right)}\right).
\end{equation}
Below we use the notation $x_n=O(N_2^\alpha)$ if $|x_n|\le C N_2^\alpha$, where $C$ is independent of $N_2$ for a given range of the values of $n$.

\begin{theorem}\label{t2}
	Suppose that $c=c_0/N_2$ with a constant $c_0>0$. Let $0<\rho^0< 1/2$.
	Then, the positive decreasing solution of problem \eqref{problem}\,--\,\eqref{matching}
	satisfies
	\begin{equation}\label{lim}
	\rho_n^{(1)}= \rho^0+O(N_2^{-1}), \qquad 0\le n\le N_1,
	\end{equation}
	\begin{equation}\label{lim'}
	\rho^{(2)}_m= \rho_*\left(\rho^0,\frac{m}{N_2}\right)+O(N_2^{-1}), \qquad 0\le m\le N_2 -\frac{\ln N_2}{\ln (1/\rho_0-1)},
	\end{equation}
	\begin{equation}\label{fj33}
	j^{(1)}=\varphi(\rho^0)+O(N_2^{-1}), \qquad 
	\end{equation}
	\begin{equation}\label{fj333}
	j^{(2)}_m = \varphi\left(\rho_*\left(\rho^0,\frac{m}{N_2}\right)\right)+O(N_2^{-1}\ln N_2), \qquad 0\le m\le N_2,
	\end{equation}
	\begin{equation}\label{fj3sq}
	j^{(3)}=\varphi\left(\rho_*\left(\rho^0,1\right)\right)+O(N_2^{-1}\ln N_2)
	\end{equation}
	as $N_2\to\infty$, where the functions $\varphi, \rho_*$ are defined by \eqref{varphi}, \eqref{W'}.
\end{theorem}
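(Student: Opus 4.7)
The approach is to exploit the scaling $c = c_0/N_2$, which forces $R_0 = c\rho_{N_1}^{(1)} = O(N_2^{-1})$ at the start codon while $R_{N_2}=0$ holds at the stop codon. I would first analyse the recursion $R_{m+1} = 1 - J^{(2)}/R_m$, most conveniently read backwards as $R_{N_2-k-1} = J^{(2)}/(1-R_{N_2-k})$; this backward map is a strong contraction near its lower fixed point $R_- = (1-\sqrt{1-4J^{(2)}})/2 \approx J^{(2)}$ with rate $R_-/(1-R_-) \approx R_-$. Starting the backward iteration at $R_{N_2}=0$ one obtains $R_{N_2-k} = R_- + O(R_-^{k+1})$, and matching to $R_0 = c_0 \rho_{N_1}^{(1)}/N_2$ pins down $J^{(2)} = c_0\rho^0/N_2 + O(N_2^{-2})$ and $R_m = c_0\rho^0/N_2 + o(N_2^{-1})$ throughout the bulk of $[0,N_2-1]$. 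The per-site coupling is thus of size $N_2^{-1}$ but accumulates into an $O(1)$ drift over the $N_2$ sites.

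With $J^{(2)}$ and $R_1$ known to be $O(N_2^{-1})$, the matching identity $j^{(1)} = j_0^{(2)} + J^{(2)} - \rho_0^{(2)} R_1$ combined with the flow identity $j_0^{(2)} = \rho_0^{(2)}(1-\rho_1^{(2)})$, the condition $\rho_0^{(2)} = (1-c)\rho_{N_1}^{(1)}$, and (from the downstream analysis below) $\rho_1^{(2)} = \rho^0 + O(N_2^{-1})$ yields $j^{(1)} = \varphi(\rho^0) + O(N_2^{-1})$, which is \eqref{fj33}. Since $\rho^0$ is a fixed point of $\rho \mapsto 1-\varphi(\rho^0)/\rho$ and $N_1$ is a constant independent of $N_2$, a fixed number of iterations of the recursion $\rho_{n+1}^{(1)} = 1-j^{(1)}/\rho_n^{(1)}$ starting from $\rho_0^{(1)} = \rho^0$ amplifies the $O(N_2^{-1})$ perturbation in $j^{(1)}$ by only a constant factor, proving \eqref{lim}.

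For the downstream bulk I would pass to the continuum limit $\rho_m^{(2)} = P(\tau) + O(N_2^{-1})$ with $\tau = m/N_2$. Inserting $R_m \sim c_0\rho^0/N_2$ and $R_m - R_{m+2} = o(N_2^{-1})$ into the balance equation for $j_m^{(2)}$ gives the leading-order ODE $\frac{d}{d\tau}\varphi(P) = -c_0\rho^0 P$, i.e.\ $(1-2P)P' = -c_0\rho^0 P$ with $P(0)=\rho^0$. Separation of variables yields $\ln(P/\rho^0) = 2(P-\rho^0) - c_0\rho^0\tau$, which exponentiates to $-2P e^{-2P} = -2\rho^0 e^{-\rho^0(2+c_0\tau)}$; taking the Lambert $W$-branch valued in $[-1,0]$ then recovers $P(\tau) = \rho_*(\rho^0,\tau)$.

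The main obstacle is the rigorous propagation of errors. For the bulk bound \eqref{lim'} I would apply a discrete Gr\"onwall argument to $\delta_m = \rho_m^{(2)} - \rho_*(\rho^0, m/N_2)$, using that $P(\tau) \in (\rho_*(\rho^0,1), \rho^0) \subset (0,1/2)$ keeps $1-2P$ bounded away from zero, so that the linearised discrete flow acts on $\delta_m$ as a $1+O(N_2^{-1})$ near-isometry and the $O(N_2^{-2})$ local truncation error sums to $O(N_2^{-1})$. The logarithmic factor in \eqref{fj333} and \eqref{fj3sq} originates at the downstream end: for $m \ge N_2$ the free recursion $\rho_{m+1}^{(2)} = 1 - j^{(3)}/\rho_m^{(2)}$ evolves near the unstable fixed point $\rho_0 := \rho_*(\rho^0,1)$ with expansion rate $(1-\rho_0)/\rho_0 = 1/\rho_0 - 1$, so enforcing the terminal condition $\rho_{N_2+N_3}^{(2)}=0$ by backward iteration leaves a discrepancy in the matched state $\rho_{N_2}^{(2)}$ of order $(\rho_0/(1-\rho_0))^{N_3}$; the boundary-layer width $\ln N_2/\ln(1/\rho_0-1)$ is the depth over which this discrepancy decays to $O(N_2^{-1})$, and summing the resulting corrections across this layer produces the $O(N_2^{-1}\ln N_2)$ loss in the fluxes.
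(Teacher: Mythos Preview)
Your outline --- $R_m = O(N_2^{-1})$ from the backward $R$-recursion, then a continuum ODE, then discrete Gr\"onwall --- matches the paper's skeleton, but two steps would fail as written, and both failures trace to the same missing ingredient.

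First, the derivation of $j^{(1)} = \varphi(\rho^0) + O(N_2^{-1})$ is circular: you invoke ``$\rho_1^{(2)} = \rho^0 + O(N_2^{-1})$ from the downstream analysis below'', but your downstream ODE is solved with initial datum $P(0)=\rho^0$, which already presupposes $\rho_0^{(2)} = \rho^0 + O(N_2^{-1})$ and hence the upstream estimate you are trying to prove. Second, the Gr\"onwall on $\delta_m = \rho_m^{(2)} - \rho_*(\rho^0,m/N_2)$ cannot work directly, because the linearisation of the \emph{pair} recursion for $(\rho_m^{(2)}, j_m^{(2)})$ about the curve $j=\varphi(\rho)$ has Jacobian approximately
\[
\begin{pmatrix} (1-\rho)/\rho & -1/\rho \\ O(N_2^{-1}) & 1 + O(N_2^{-1}) \end{pmatrix},
\]
and for $\rho<1/2$ the eigenvalue $(1-\rho)/\rho>1$ makes the $\rho$-direction exponentially expanding over $N_2$ steps --- not a ``$1+O(N_2^{-1})$ near-isometry''. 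There is no closed scalar recursion for $\delta_m$ unless you already know how $j_m^{(2)}$ relates to $\rho_m^{(2)}$.

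The paper supplies exactly that relation as a preliminary estimate, proved \emph{before} any matching or Gr\"onwall: using only the a~priori facts $\rho_m^{(2)}<\rho^0<1/2$ and $0\le j_m^{(2)}-j_{m+1}^{(2)}\le 2R_0\rho^0 = O(N_2^{-1})$, a backward recursion on $y_m := j_m^{(2)}-\varphi(\rho_m^{(2)})\ge 0$ gives $y_{m+1}\ge y_m/q + (j_{m+1}^{(2)}-j_m^{(2)})$ with $q=\rho^0/(1-\rho^0)<1$, hence
\[
y_m \le q^{N_2-m}\,y_{N_2} + O(N_2^{-1}) = O(N_2^{-1}), \qquad 0\le m\le N_2 - \tfrac{\ln N_2}{|\ln q|}.
\]
With this adiabatic bound in hand the circularity dissolves: $j^{(1)} = j_0^{(2)}+O(N_2^{-1}) = \varphi(\rho_0^{(2)})+O(N_2^{-1})$, while $j^{(1)}=\rho^0(1-\rho_1^{(1)})\ge\varphi(\rho^0)\ge\varphi(\rho_0^{(2)})$ squeezes to give $\rho_0^{(2)}=\rho^0+O(N_2^{-1})$ and then \eqref{lim} for all $n\le N_1$ by monotonicity (no need to assume $N_1$ fixed and iterate). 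And the Gr\"onwall is run on the flux error $e_m = j_m^{(2)}-\hat j_m$ along the neutral direction, with the $y_m$ bound used to replace $\rho_m^{(2)}$ by $\varphi^{-1}(j_m^{(2)})+O(N_2^{-1})$. Finally, the $O(N_2^{-1}\ln N_2)$ loss in \eqref{fj333}--\eqref{fj3sq} arises from the $O(\ln N_2)$ sites just \emph{before} $m=N_2$ where the backward bound on $y_m$ degrades (the term $q^{N_2-m}y_{N_2}$ is no longer $O(N_2^{-1})$), not from the segment $m\ge N_2$ as you suggest.
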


The proof is presented in the next section.

\begin{remark}\label{cor}
	It is easy to see that the limit exit flow $j^{(3)}_*(\rho^0)=\varphi(\rho_*(\rho^0,1))$ achieves its maximum over the interval $0\le \rho^0 \le 1/2$ at the point
	\begin{equation}\label{**}
	\rho^0=\frac{1}{2+c_0}.
	\end{equation}
	Thus, formula \eqref{fj3sq} of Theorem \ref{t2} ensures that the exit flow $j^{(3)}=f_{N_*-1}^s$
	of scanning ribosomes defined by the stationary solution of model \eqref{bvr}\,--\,\eqref{mc} reaches a maximum value when the initial density is close the value \eqref{**} if  $N_2$ is sufficiently large. In other words, the dependence of the exit flow on the initial density of scanning ribosomes is non-monotone.
\end{remark}

\begin{remark}\label{newremark2}
	{If $N_2$ is sufficiently large and $N_3=0$ (the uORF ends at the stop codon), then according to Theorem \ref{t2}, the density $\rho_m^{(2)}$ of scanning ribosomes drops from approximately the value $\rho_*(\rho^0,1)>0$ to zero over a relatively short segment of the uORF adjacent to its downstream end. In other words, the distribution of the density of scanning ribosomes over uORF has a boundary layer at the downstream end. The relative width of this boundary layer according to Theorem \ref{t2} is of the order of $N_2^{-1}\ln N_2$. }
	
	{The situation is similar for $N_3>0$. In this case, on the uORF segment downstream of the stop codon the density increments satisfy
		\begin{equation}
		0\le \rho_{m-1}^{(2)}-\rho_{m}^{(2)}\le \frac{\rho^0}{1-\rho^0} (\rho_m^{(2)}-\rho_{m+1}^{(2)}).
		\end{equation}
		Due to this relationship, for sufficiently large $N_3$, the density distribution exhibits one boundary layer, which is located at the downstream end of the uORF.
	}
\end{remark}

\begin{remark}\label{newremark3}
	The biologically relevant situation corresponds to relatively small values of $\rho^0$ (the low density regime). For completeness, let us discuss larger densities as well.
	
	The continuous limit of the standard TASEP model with the conserved flow undergoes a bifurcation as the density increases. The saturated congestion regime corresponds to all densities exceeding $0.5$ and is characterized by the same maximal flow of particles. This applies to the initial segment $0\le n\le N_1$ of the TASEP model described in Section \ref{model}. Therefore, the exit flow of this model has the same value for all the initial densities $\rho^0\ge 0.5$. Numerical examples are considered in the next section.
\end{remark}

\section{Numerical results}\label{numerics}

Figure \ref{inout} shows the dependence of the exit flow $j^{(3)}=j^{(3)}(\rho^0)$ of  scanning ribosomes at $n=N_*$
on the initial density $\rho^0$ at $n=0$. The function $j^{(3)}=j^{(3)}(\rho^0)$ is obtained from the stationary distribution of the TASEP model and the stationary solution of the approximating deterministic model \eqref{bvr}\,--\,\eqref{mc}.

\begin{figure}[!hbt]
	\centering
	\includegraphics[scale=1]{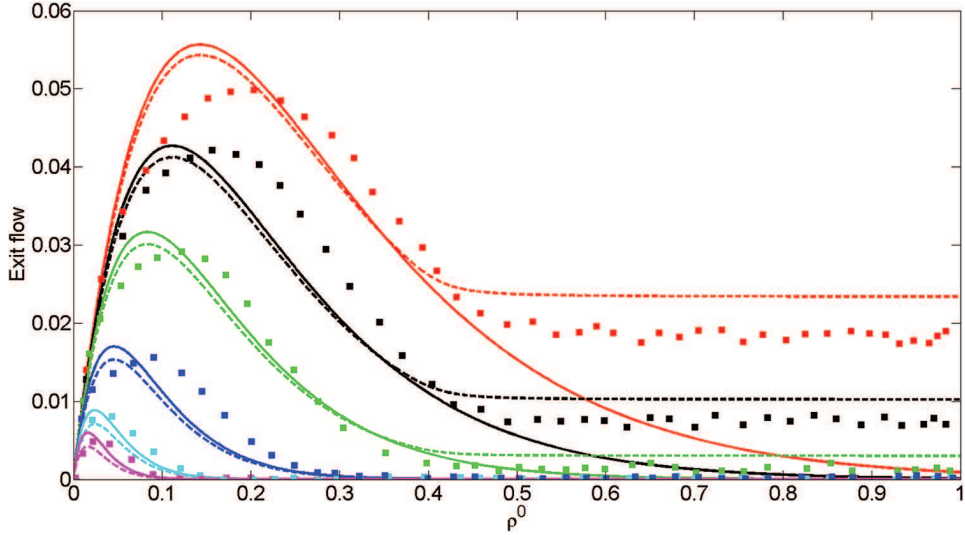}
	\caption{Exit flow of scanning ribosomes as a function of the density $\rho^0$ of ribosomes at $n=0$ for different values of the conversion rate, $c$, of scanning ribosomes to elongating ribosomes at the start codon. Dots correspond to the stationary distribution of the TASEP model, dashed curves correspond to the stationary solution of the deterministic model \eqref{bvr}\,--\,\eqref{mc}, and solid curves correspond to the explicit formula for the limit exit flow, $j_*^{(3)}(\rho^0)=\varphi(\rho_*(\rho^0,1))$, of Theorem \ref{t2} and Remark \ref{cor}. The red, black, green, blue, cyan and pink curves correspond to $c=0.025, 0.035, 0.05, 0.1, 0.2, 0.3$, respectively; $N_1=N_3=100$, $N_2=200$. }
	\label{inout}
\end{figure}

\begin{figure}[!hbt]
	\centering
	\includegraphics[scale=1]{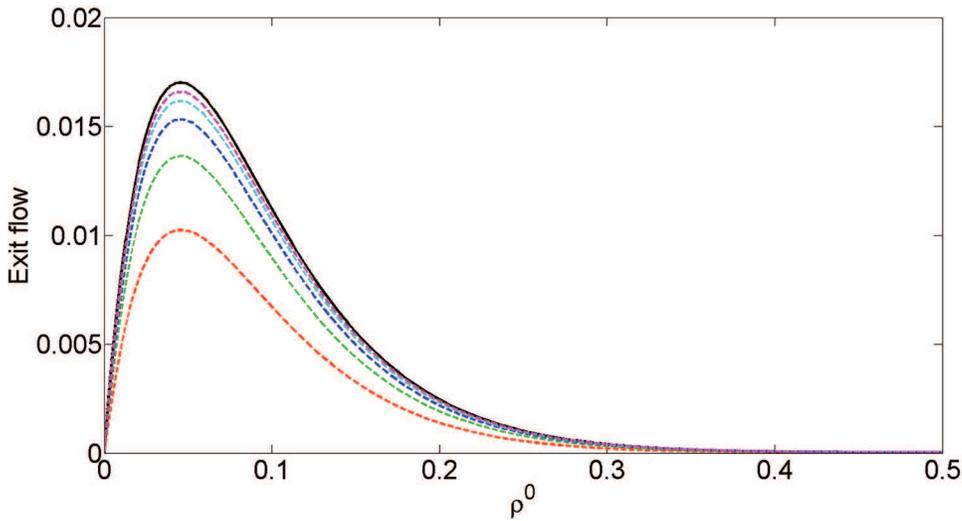}
	\caption{Exit flow of scanning ribosomes as a function of the density $\rho^0$ for different length of the uORF. The conversion rate of scanning ribosomes to elongating ribosomes scales with $N_2$ as $c=c_0/N_2$ where $c_0=20$ is fixed, see in Theorem \ref{t2}. The black solid curve corresponds to the explicit formula for the limit exit flow, $j_*^{(3)}(\rho^0) = \varphi(\rho_*(\rho^0,1))$, see Remark \ref{cor}. The dashed curves are obtained from the stationary solution of the deterministic model \eqref{bvr}\,--\,\eqref{mc}. The pink, cyan, blue, green and red curves correspond to $N_2 = 50$, $100$, $200$, $400$, $800$, respectively; $N_1=N_3=100$.}
	\label{statvsdeter}
\end{figure}

The exit flow peaks between $\rho^0=0$ and $\rho^0=0.5$. One can see that the peak predicted by the TASEP model is shifted to the right of the peak obtained from the deterministic model, and tends to be wider for the TASEP model. As expected, the output decreases with the increasing rate $c$ of conversion of scanning to elongating ribosomes at the start codon. 

In accordance with Theorem \ref{t2}, the dependence of the exit flow on the initial density, $j^{(3)}(\rho^0)$, for the stationary solution of the deterministic model \eqref{bvr}\,--\,\eqref{mc} is well approximated by the composition
$\varphi \circ \rho_*$ of the functions \eqref{varphi} and \eqref{W'} on the interval $0<\rho^0<1/2$, see Figures \ref{inout} and \ref{statvsdeter}.

According to Remark \ref{newremark3}, the stationary distribution of the TASEP model features (approximately) the same exit flow of scanning ribosomes for all the initial densities $1/2\le \rho^0<1$. The same is true for the deterministic model \eqref{bvr}\,--\,\eqref{mc}, see Figure \ref{inout}.
\begin{figure}[!hbt]
	\centering
	\includegraphics[scale=1]{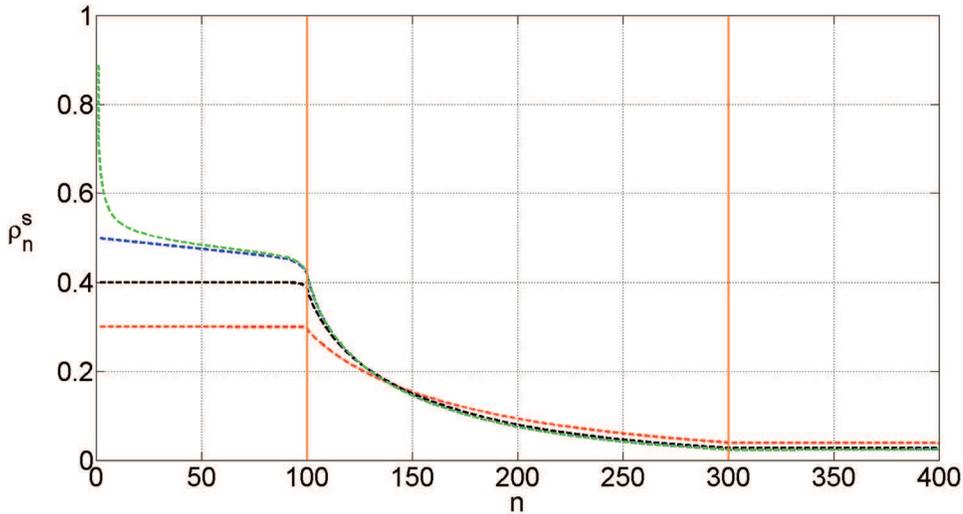}
	\caption{The dependence of the density $\rho_n^s$ on the position $n$ along the uORF for the stationary solution of the deterministic model \eqref{bvr}\,--\,\eqref{mc}. The red, black, blue and green curves correspond to $\rho^0 = 0.3$, $0.4$, $0.5$, $0.9$, respectively; $N_1=N_3=100$, $N_2=200$; $c=0.025$. Vertical lines indicate the position of the start and stop codons.  }
	\label{distonmrna}
\end{figure}

~~~~Figures \ref{distonmrna} and \ref{flowonmrna} present the stationary distribution of density and flow of scanning ribosomes, respectively, for the deterministic model \eqref{bvr}\,--\,\eqref{mc} for different values of the initial density and $N_2=200$. On the interval $N_1\le n\le N_1+N+2$, each density graph is practically indistinguishable from the limit density curve $\rho_n=\rho_*(\rho^0,n/N_2)$ with the corresponding $\rho^0$ (not shown). This agrees with Theorem \ref{t2}. Similarly, the flow graphs are very close to the limit curves $f_n=\varphi(\rho_*(\rho^0,n/N_2))$ corresponding to the limit $N_2\to\infty$. The flow graphs with $\rho^0\ge 1/2$ practically coincide with each other (the blue and green curves in Figure \ref{flowonmrna}). The flow reaches its maximum value $1/4$ on the initial segment $0\le n\le N_1$ for $\rho_0\ge 1/2$. The density graphs with $\rho^0\ge 1/2$ feature a boundary layer near $n=0$ and almost coincide with each other for larger values of $n$ (see the plots of the same color in Figure \ref{distonmrna}).

\begin{figure}[!hbt]
	\centering
	\includegraphics[scale=1]{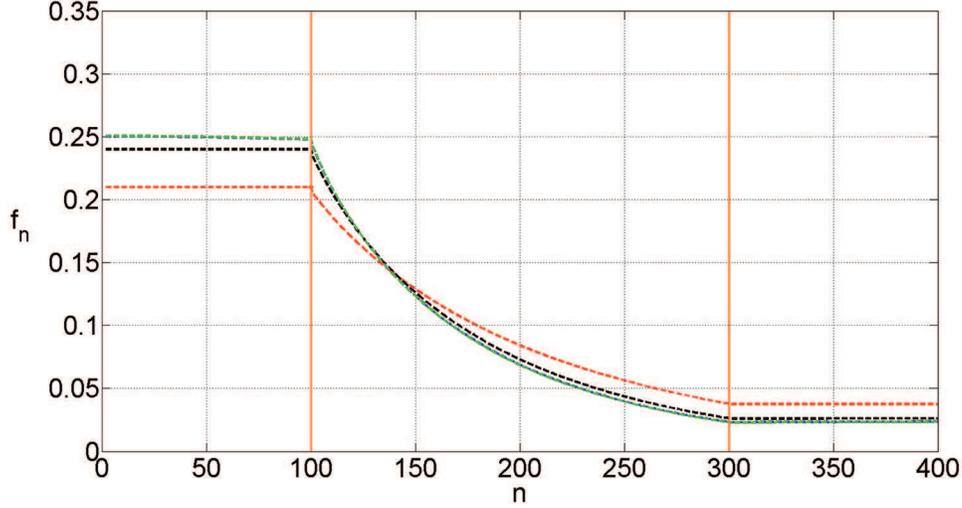}
	\caption{The dependence of the flow $f_n$ of scanning ribosomes on the position $n$ along the uORF for the stationary solution of the deterministic model \eqref{bvr}\,--\,\eqref{mc}. The color code and density are the same as in Figure \ref{distonmrna}.}
	\label{flowonmrna}
\end{figure}

\section{Proofs}\label{proofs}

\subsection{Auxiliary lemmas}
The proof of Theorem \ref{t1} uses a few auxiliary lemmas.

Consider iterations
\begin{equation}\label{J}
\rho_{n+1}=1-\frac{j}{\rho_n},\qquad \rho_0=\rho^0, \qquad 
n=0,1,2,\ldots
\end{equation}
Since the sequence \eqref{J} is defined by $j$ and $\rho^0$, we write $\rho_n=\rho_n(j,\rho^0)$. The following lemma summarizes simple properties of this sequence.

\begin{figure}[hbt]
	\centering
	\includegraphics[scale = 1]{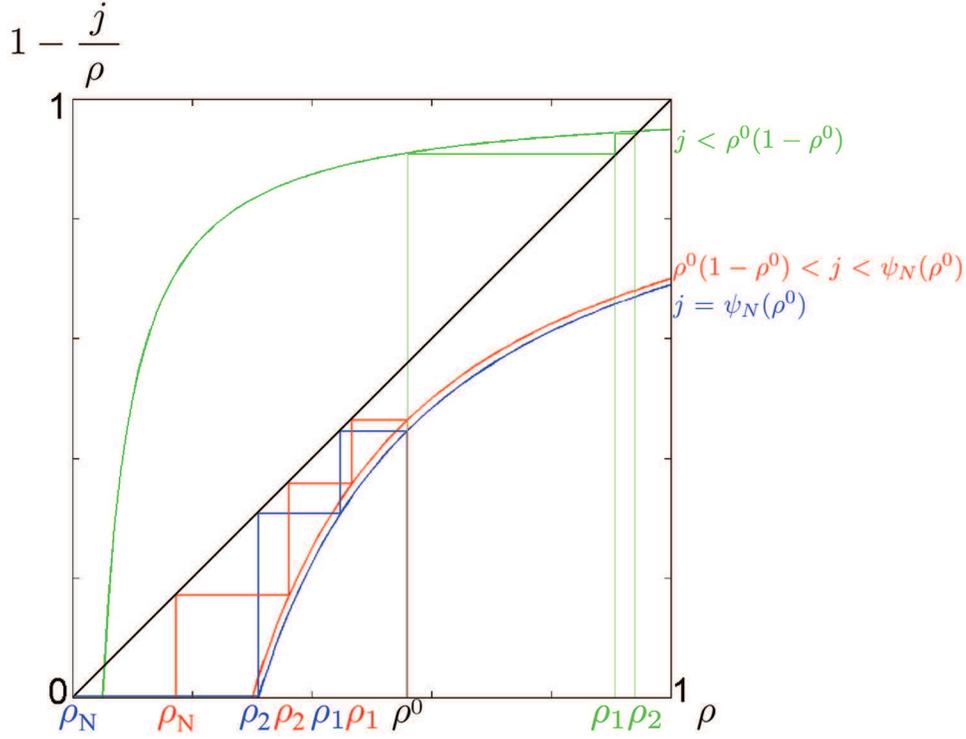}
	\caption{The cobweb plot of the sequence \eqref{J} for different values of $j$. The green, red and blue correspond to $j=0.05$, $0.3$, $0.31$, respectively; $\rho^0=0.58$.}
	\label{fig:J_rho}
\end{figure}

\begin{lemma}\label{lemmax}
	Given an $N\ge1$, there is a continuous strictly increasing function 
	$\psi_N: [0,
	1]\to \mathbb{R}_+$ satisfying
	\[
	\psi_N(\rho)>\varphi(\rho)=\rho(1-\rho), \qquad 0\le \rho\le 1,
	\]
	and the corresponding domain 
	\[ 
	\mathcal{D}_N=\left\{ (j,\rho^0): \ 
	0<j<\psi_N(\rho^0),\ \  0<\rho_0<
	1\right\},
	\]
	such that 
	\begin{itemize}
		\item The following relation holds:
		\[
		\rho_N(\psi_N(\rho^0),\rho^0)=0;
		\]
		\item For every $(j,\rho^0)\in \mathcal{D}_N$, the iterations 
		$\rho_n=\rho_n(j,\rho^0)$ defined by \eqref{J} satisfy
		\[
		\rho_0,\rho_1,\ldots,\rho_{N}>0;
		\]
		\item The iterations $\rho_0,\rho_1,\ldots,\rho_{N}$ strictly decrease 
		if $j>\rho^0(1-\rho^0)$,
		strictly increase if  $j<\rho^0(1-\rho^0)$, and satisfy 
		$\rho_0=\cdots=\rho_N=\rho^0$ if 
		$j=\rho^0(1-\rho^0)$ for $(j,\rho^0)\in \mathcal{D}_N$;
		\item
		The continuous functions $\rho_n=\rho_n(j,\rho^0): {\mathcal 
			D}_N\to \mathbb{R}$ strictly decrease in $j$ and strictly increase in 
		$\rho^0$ for all $1\le n\le N$;
		\item On the boundary of $\mathcal{D}_N$,
		\[
		\rho_N(j,\rho^0)\to 1 \quad {\rm as} \quad j\to 0+; 
		\qquad \rho_{N+1}(j,\rho^0)\to -\infty
		\quad {\rm as} \quad j\to \psi_N(\rho^0)-.
		\] 
	\end{itemize}
\end{lemma}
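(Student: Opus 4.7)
The plan is to prove Lemma \ref{lemmax} by induction on $N$, exploiting the recursive identity $\rho_{n+1}(j,\rho^0)=1-j/\rho_n(j,\rho^0)$ and elementary properties of the one-step map $g_j(\rho)=1-j/\rho$, which is smooth and strictly increasing on the positive half-line. For the base case $N=1$ I would compute directly: $\rho_1=1-j/\rho^0$ is positive iff $j<\rho^0$, so I set $\psi_1(\rho^0)=\rho^0$, which is strictly larger than $\varphi(\rho^0)=\rho^0(1-\rho^0)$ on $(0,1)$; strict decrease in $j$, strict increase in $\rho^0$, and the boundary limits $\rho_1\to 1$ as $j\to 0+$ and $\rho_2=1-j/\rho_1\to -\infty$ as $j\to\psi_1(\rho^0)-$ are then immediate.

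For the induction step, assuming the stated properties hold for $N$, I fix $\rho^0\in(0,1)$ and study the function $j\mapsto \rho_N(j,\rho^0)-j$ on $(0,\psi_N(\rho^0))$. By the induction hypothesis it is continuous and strictly decreasing, with limit $1$ as $j\to 0+$ and limit $-\psi_N(\rho^0)<0$ as $j\to\psi_N(\rho^0)-$; the intermediate value theorem and strict monotonicity then yield a unique root, which I designate $\psi_{N+1}(\rho^0)$, and $\rho_{N+1}(\psi_{N+1}(\rho^0),\rho^0)=0$ follows by direct substitution. Continuity and monotonicity of $\psi_{N+1}$ in $\rho^0$ follow from the standard implicit-function argument applied to $\rho_N(j,\rho^0)-j=0$. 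To verify $\psi_{N+1}(\rho^0)>\varphi(\rho^0)$, I would observe that at $j=\varphi(\rho^0)$ the value $\rho^0$ is a fixed point of $g_j$, so $\rho_n(\varphi(\rho^0),\rho^0)=\rho^0$ for every $n$, whence $\rho_N-j$ equals $(\rho^0)^2>0$ there; since $\rho_N-j$ is strictly decreasing in $j$ and vanishes at $\psi_{N+1}(\rho^0)$, the inequality follows. On $\mathcal{D}_{N+1}$ itself, positivity of $\rho_{N+1}=1-j/\rho_N$ reduces to $\rho_N>j$, which is exactly the defining condition; monotonicity of $\rho_{N+1}$ in $(j,\rho^0)$ then transfers from that of $\rho_N$, since $j/\rho_N$ strictly increases in $j$ and strictly decreases in $\rho^0$. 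The boundary limits propagate: $\rho_{N+1}\to 1$ as $j\to 0+$ because $\rho_N\to 1$, and $\rho_{N+1}\to 0+$ as $j\to\psi_{N+1}(\rho^0)-$ because $\rho_N\to j$; consequently $\rho_{N+2}=1-j/\rho_{N+1}\to -\infty$, since $j$ is bounded below away from zero while $\rho_{N+1}\to 0+$.

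The trichotomy of monotonicity for the finite orbit $\rho_0,\ldots,\rho_N$ I would treat separately, using the identity $\rho_{n+1}-\rho_n=(\varphi(\rho_n)-j)/\rho_n$ together with fixed-point analysis of $g_j$. If $j=\varphi(\rho^0)$ the orbit is constant. If $j<\varphi(\rho^0)$, then necessarily $j<1/4$, the map $g_j$ admits two positive fixed points $\rho_\pm=(1\pm\sqrt{1-4j})/2$, and $\rho^0\in(\rho_-,\rho_+)$; since $g_j$ is increasing with $g_j(\rho)>\rho$ on this interval, the orbit strictly increases and remains trapped in $(\rho_-,\rho_+)$. If $j>\varphi(\rho^0)$, then either $j>1/4$ (so $\varphi<j$ identically on $(0,1)$) or $\rho^0\notin[\rho_-,\rho_+]$; in either case $g_j(\rho)<\rho$ wherever the orbit sits, and the orbit strictly decreases as long as it remains positive. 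The main obstacle is the subcase $\rho^0<\rho_-$, where a generic orbit eventually dips below $j$ and produces a non-positive iterate; here the decrease claim across $N$ steps must be extracted from the standing hypothesis $(j,\rho^0)\in\mathcal{D}_N$, which by the very construction of $\mathcal{D}_N$ excludes escape from positivity within the first $N+1$ iterates.
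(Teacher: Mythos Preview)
Your argument is correct. The paper's own proof of this lemma consists of a single sentence---``the above statement directly follows from the definition of the sequence \eqref{J}, see Figure~\ref{fig:J_rho}''---so there is essentially nothing to compare against: you have supplied the rigorous details that the authors leave to the cobweb picture. The inductive construction of $\psi_{N+1}$ as the unique root of $j\mapsto\rho_N(j,\rho^0)-j$ is exactly the natural way to formalize what the diagram suggests, and your monotonicity, continuity, and boundary-limit checks all go through as written.

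One cosmetic point worth noting (it is an imprecision in the statement rather than in your proof): with $\psi_1(\rho^0)=\rho^0$ the strict inequality $\psi_N(\rho)>\varphi(\rho)$ degenerates to equality at $\rho=0$, and Remark~\ref{rem1} of the paper in fact forces $\psi_N(0)=0$ for every $N$. This is harmless since the domain $\mathcal{D}_N$ only involves $0<\rho^0<1$, where the strict inequality does hold.
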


\begin{proof}
	The above statement directly follows from the definition of the sequence \eqref{J}, see Figure \ref{fig:J_rho}.	
\end{proof}

Two remarks are in order. 

\begin{remark}\label{rem1}
	Relations
	\[
	\rho_1=1-\frac{j}{\rho^0} >0 \quad {\rm for\ every} \quad 0 
	< j<\psi_{N}(\rho^0)
	\]
	imply
	\begin{equation}\label{psipsi}
	\psi_{N}(\rho)\le \rho, \qquad 0\le \rho\le 1.
	\end{equation}
\end{remark}

\begin{remark}\label{rem2}
	From Lemma \ref{lemmax}, it follows that iterations \eqref{J} with 
	$j=\psi_{N}(\rho^0)$ satisfy
	\begin{equation}\label{psi}
	j=\psi_{N}(\rho^0) \qquad \Rightarrow \qquad 
	\rho_0 >\rho_1>\cdots > \rho_{N-1}> 0=\rho_N,
	\end{equation}
	where each $\rho_n=\rho_n\bigl(\psi_{N}(\rho^0),\rho^0\bigr)$, 
	$n=0,\ldots,N-1$, strictly increases with $\rho^0$. 
	Moreover, each difference
	$\rho_{n-1}-\rho_{n}$ strictly increases with $\rho^0$. Indeed, 
	from the relations
	\[
	\rho_{n}-\rho_{n+1}=\frac{j}{\rho_n}-\frac{j}{\rho_{n-1}}=\frac{j(\rho_{n-1}-\rho_n)}{\rho_{n-1}{\rho_n}}=\frac{1-\rho_n}{\rho_n}(\rho_{n-1}-\rho_n)=\left(\frac{1}{\rho_n}-1\right)(\rho_{n-1}-\rho_n),
	\]
	it follows that 
	\[
	\rho_{n-1}-\rho_n=(\rho_{N-1}-\rho_N)\prod_{k=n}^{N-1}\frac{1}{\frac1{\rho_k}-1}.
	\]
	Since $\rho_N=0$ and all the $\rho_n$ increase with $\rho^0$, we see that each difference 
	\begin{equation}\label{aa}
	\rho_{n-1}-\rho_n=\rho_{n-1}\bigl(\psi_{N}(\rho^0),\rho^0\bigr)
	-\rho_n\bigl(\psi_{N}(\rho^0),\rho^0\bigr), \qquad n=1,\ldots,N,
	\end{equation}
	increases with $\rho^0$.
\end{remark}

Now, let us consider the following iterations:
\begin{eqnarray}
\label{eq:Main_Proof}
\begin{cases}
\rho_{n+1}=1-\frac{j_n}{\rho_n},\\
j_{n+1}=j_n-r_n\rho_n-\delta_n \rho_{n+1},
\end{cases}
\end{eqnarray} 
where $n=0,1,2,\ldots$\ In the following lemma, we 
compare two sequences $\{(j_n,\rho_n)\}$ and $\{(\hat j_n,\hat \rho_n)\}$, 
which start from different initial conditions and possibly have different 
sequences of coefficients $\{r_n\}$, $\{\delta_n\}$ and $\{\hat r_n\}$, $\{\hat \delta_n\}$, respectively.

\begin{lemma}\label{monotone}
	System \eqref{eq:Main_Proof} depends monotonically on the sequences 
	$\{r_n\}$, $\{\delta_n\}$ and on the initial conditions  
	in the positive quadrant in the following sense: 
	
	If
	\[
	0\le \hat r_n\leq r_n, \qquad 0\le \hat \delta_n\leq \delta_n,
	\qquad n=0,\ldots,N-1,
	\]
	\[
	\hat j_n, j_n, \hat 
	\rho_n, \rho_n>0, \qquad n=0,\ldots,N,
	\]
	and $\hat j_0\ge j_0$, $\hat 
	\rho_0\le \rho_0$, then 
	\[
	\qquad \hat j_n\ge j_n, \qquad \hat 
	\rho_n\le \rho_n, \qquad n=1,\ldots,N.
	\]
\end{lemma}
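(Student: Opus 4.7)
The plan is to prove both asserted inequalities simultaneously by induction on $n$, using the two recurrences in \eqref{eq:Main_Proof} componentwise. The base case $n=0$ is contained in the hypotheses $\hat j_0\ge j_0$ and $\hat\rho_0\le\rho_0$. For the inductive step I would assume $\hat j_n\ge j_n$ and $\hat\rho_n\le\rho_n$ and derive the same two inequalities at index $n+1$.

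First I would handle the density. Combining $\hat j_n\ge j_n>0$ with $0<\hat\rho_n\le\rho_n$, the positive numerators and denominators compose to give $\hat j_n/\hat\rho_n\ge \hat j_n/\rho_n\ge j_n/\rho_n$, so
\[
\hat\rho_{n+1}=1-\frac{\hat j_n}{\hat\rho_n}\le 1-\frac{j_n}{\rho_n}=\rho_{n+1}.
\]
Next I would handle the flow. The bounds $0\le\hat r_n\le r_n$ with $0<\hat\rho_n\le\rho_n$ yield $\hat r_n\hat\rho_n\le r_n\rho_n$, and likewise $0\le\hat\delta_n\le\delta_n$ together with $0<\hat\rho_{n+1}\le\rho_{n+1}$ (the latter just proved, its positivity being part of the standing hypothesis) yield $\hat\delta_n\hat\rho_{n+1}\le\delta_n\rho_{n+1}$. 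Subtracting these two smaller nonnegative quantities from the larger $\hat j_n$ gives
\[
\hat j_{n+1}=\hat j_n-\hat r_n\hat\rho_n-\hat\delta_n\hat\rho_{n+1}\ge j_n-r_n\rho_n-\delta_n\rho_{n+1}=j_{n+1},
\]
which closes the induction.

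There is no substantial obstacle; the only point requiring care is that both recursions behave monotonically only inside the positive quadrant. The ratio $j/\rho$ is increasing in $j$ and decreasing in $\rho$ only when the numerator and denominator are positive, and the bilinear products $r\rho$, $\delta\rho$ preserve inequalities only when all four comparing factors are nonnegative. This is precisely why the statement explicitly assumes that every $\hat j_n,j_n,\hat\rho_n,\rho_n$ stays strictly positive throughout the range $0\le n\le N$, rather than deducing it; with that in hand the inductive step above is purely routine.
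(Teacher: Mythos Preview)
Your proof is correct and follows exactly the approach the paper indicates: a direct induction on $n$ using the two recurrences in \eqref{eq:Main_Proof}. The paper's own proof consists of the single sentence that the statement follows by induction, so your write-up merely fills in the routine details the authors left implicit.
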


\begin{proof}
	The statement follows directly from \eqref{eq:Main_Proof} by induction in $n$.
\end{proof}

\begin{lemma}\label{new}
	Suppose 
	\begin{equation}\label{assump}
	\rho^{(2)}_0,\ldots,\rho_{k}^{(2)}>0, \qquad \rho_{k}^{(2)}>\rho_{k+1}^{(2)}, \qquad j_k^{(2)}>0.
	\end{equation}
	Then,
	\begin{equation}\label{conclu}
	\rho_0^{(1)}>\cdots>\rho_{N_1}^{(1)}>	\rho^{(2)}_0>\cdots>\rho_{k}^{(2)}>\rho_{k+1}^{(2)}.
	\end{equation}	
\end{lemma}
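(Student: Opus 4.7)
The plan is to propagate the decreasing structure from the downstream end of the uORF back to the upstream boundary in four stages. First I would establish the strict decrease of the elongating-ribosome sequence $R_m$; then use it in a backward induction to extend the decrease from $\rho^{(2)}_k>\rho^{(2)}_{k+1}$ all the way to $\rho^{(2)}_0>\rho^{(2)}_1$; then obtain the separation $\rho^{(1)}_{N_1}>\rho^{(2)}_0$ directly from the matching condition; and finally deduce the decrease of the upstream iterates $\rho^{(1)}_n$ from the start-codon flow balance combined with Lemma \ref{lemmax}.

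For the first stage, the hypothesis $\rho^{(2)}_0>0$ together with $\rho^{(2)}_0=(1-c)\rho^{(1)}_{N_1}$ forces $R_0=c\rho^{(1)}_{N_1}>0$. The sequence $R_0,\ldots,R_{N_2}$ obeys \eqref{problem'}, which is the iteration treated in Lemma \ref{lemmax} with $(N,\rho^0,j)\mapsto(N_2,R_0,J^{(2)})$, and terminates at $R_{N_2}=0$. Hence $J^{(2)}=\psi_{N_2}(R_0)$, and Remark \ref{rem2} yields $R_0>R_1>\cdots>R_{N_2-1}>R_{N_2}=0$.

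For the second stage, writing $\Delta_m:=\rho^{(2)}_m-\rho^{(2)}_{m+1}$ and eliminating $j^{(2)}_m,\,j^{(2)}_{m+1}$ between the two lines of \eqref{problem''} produces the identity
\[
\Delta_m(1-\rho^{(2)}_{m+1})=\rho^{(2)}_{m+1}\Delta_{m+1}+R_{m+1}\rho^{(2)}_m+(R_m-R_{m+2})\rho^{(2)}_{m+1},
\]
whose coefficients $R_{m+1}$ and $R_m-R_{m+2}$ are nonnegative by the first stage. Starting from the base $\Delta_k>0$, $j^{(2)}_k>0$, I would run backward induction from $m=k-1$ down to $m=0$: the rearranged recurrence $j^{(2)}_m=j^{(2)}_{m+1}+R_{m+1}\rho^{(2)}_m+(R_m-R_{m+2})\rho^{(2)}_{m+1}$ preserves $j^{(2)}_m>0$, which together with $\rho^{(2)}_m>0$ and the first line of \eqref{problem''} forces $\rho^{(2)}_{m+1}<1$; the identity then delivers $\Delta_m>0$ because its left factor is positive and its right side is bounded below by $\rho^{(2)}_{m+1}\Delta_{m+1}>0$. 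This yields $\rho^{(2)}_0>\cdots>\rho^{(2)}_k>\rho^{(2)}_{k+1}$.

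For the third and fourth stages, the matching $\rho^{(1)}_{N_1}=\rho^{(2)}_0/(1-c)$ with $c\in(0,1)$ gives $\rho^{(1)}_{N_1}>\rho^{(2)}_0$ at once. Substituting $j^{(2)}_0=\rho^{(2)}_0(1-\rho^{(2)}_1)$, $J^{(2)}=R_0(1-R_1)$, and $\rho^{(2)}_0+R_0=\rho^{(1)}_{N_1}$ into the flow balance of \eqref{matching} simplifies to
\[
j^{(1)}=\rho^{(1)}_{N_1}\bigl[1-R_1-(1-c)\rho^{(2)}_1\bigr].
\]
By the first two stages, $R_1+(1-c)\rho^{(2)}_1<R_0+\rho^{(2)}_0=\rho^{(1)}_{N_1}$, whence $j^{(1)}>\rho^{(1)}_{N_1}(1-\rho^{(1)}_{N_1})=\varphi(\rho^{(1)}_{N_1})$. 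Combined with $j^{(1)}=\rho^{(1)}_{N_1-1}(1-\rho^{(1)}_{N_1})$ from \eqref{problem} and the observation that $\rho^{(1)}_{N_1}<1$ (otherwise $j^{(1)}$ would both vanish and be strictly positive), division by $1-\rho^{(1)}_{N_1}>0$ yields $\rho^{(1)}_{N_1-1}>\rho^{(1)}_{N_1}$. The trichotomy in Lemma \ref{lemmax} then forces the entire sequence $\rho^{(1)}_0,\ldots,\rho^{(1)}_{N_1}$ to be strictly decreasing, completing the chain. The principal obstacle is the derivation of the Stage 2 identity and the careful bookkeeping of positivity of $j^{(2)}_m$ and of the strict inequality $\rho^{(2)}_{m+1}<1$ along the backward induction; the collapse of the flow balance in Stage 4 into a form manifestly exceeding $\varphi(\rho^{(1)}_{N_1})$ is the conceptual bridge that lets the downstream decrease seed the upstream decrease.
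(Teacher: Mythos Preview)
Your proof is correct and follows the same overall architecture as the paper's: first the strict decrease of $R_m$, then the decrease of $\rho^{(2)}_m$, then the jump $\rho^{(1)}_{N_1}>\rho^{(2)}_0$ via the matching, and finally $j^{(1)}>\varphi(\rho^{(1)}_{N_1})$ to trigger the decrease of $\rho^{(1)}_n$ through Lemma~\ref{lemmax}. Your Stage~4 algebra, collapsing the balance to $j^{(1)}=\rho^{(1)}_{N_1}\bigl[1-R_1-(1-c)\rho^{(2)}_1\bigr]$, is in fact a bit cleaner than the paper's route through the inequality $c+(1-c)^2<1$.

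The one genuine tactical difference is Stage~2. You derive the difference identity for $\Delta_m$ and run a backward induction. The paper instead notes that the second line of \eqref{problem''} (with $R_{m+1}\ge 0$ and $R_m-R_{m+2}\ge 0$) makes $j^{(2)}_m$ nonincreasing in $m$; then from $\rho^{(2)}_{m+1}=1-j^{(2)}_m/\rho^{(2)}_m$ one sees that $\rho^{(2)}_{n}\ge\rho^{(2)}_{n-1}$ forces $\rho^{(2)}_{n+1}\ge\rho^{(2)}_{n}$, so a single non-decrease would propagate forward to index $k$, contradicting $\rho^{(2)}_k>\rho^{(2)}_{k+1}$. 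This contrapositive argument is shorter and avoids your identity altogether, but your approach has the advantage of making the quantitative relation between successive gaps explicit, which could be useful elsewhere.
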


\begin{proof}
	By Lemma \ref{lemmax}, $R_0>R_1>\cdots>R_{N_2}=0$.
	Therefore, the sequence $j^{(2)}_n$ decreases.
	If $\rho_{n}^{(2)}\ge \rho_{n-1}^{(2)}>0$ for some $n\le k$, then due to 
	$j^{(2)}_{n-1}\ge j^{(2)}_{n}\ge j^{(2)}_k>0$,
	\[
	\rho_{n+1}^{(2)}=1-\frac{j^{(2)}_{n}}{\rho_{n}^{(2)}}\ge 
	1-\frac{j^{(2)}_{n-1}}{\rho_{n-1}^{(2)}}=\rho_{n}^{(2)}.
	\]
	That is, $\rho_{n}^{(2)}\ge \rho_{n-1}^{(2)}$ implies $\rho_{n+1}^{(2)}\ge 
	\rho_{n}^{(2)}$. Therefore \eqref{assump} implies 
	\begin{equation}\label{passing}
	\rho^{(2)}_0>\cdots>\rho_{k}^{(2)}>\rho_{k+1}^{(2)}.
	\end{equation}
	According to matching conditions \eqref{matching},
	\begin{equation}\label{AB}
	j^{(1)}=j^{(2)}_0 + J^{(2)}-\rho_0^{(2)}R_1=
	j^{(2)}_0 + R_0(1-R_1)-\rho_0^{(2)}R_1=
	j^{(2)}_0 + R_0-\rho_{N_1}^{(1)}R_1,
	\end{equation}
	and due to the relations $j_0^{(2)}=\rho_0^{(2)}(1-\rho_1^{(2)})>\rho_0^{(2)}(1-\rho_0^{(2)})$ and $R_0>R_1$,
	\[
	j^{(1)}>\rho_0^{(2)}(1-\rho_0^{(2)}) + R_0-\rho_{N_1}^{(1)}R_0,
	\]
	where $\rho_0^{(2)}=(1-c)\rho_{N_1}^{(1)}$, $R_0=c \rho_{N_1}^{(1)}$. Therefore,
	\[
	j^{(1)}>(1-c)\rho_{N_1}^{(1)} \bigl(1-(1-c)\rho_{N_1}^{(1)}\bigr)+ c \rho_{N_1}^{(1)}(1-\rho_{N_1}^{(1)}) =\rho_{N_1}^{(1)}- (c+(1-c)^2)(\rho_{N_1}^{(1)})^2,
	\]
	hence from $c<1$ it follows that $j^{(1)}>\rho_{N_1}^{(1)}(1-\rho_{N_1}^{(1)})$, which due to Lemma \ref{lemmax} implies $\rho_0^{(1)}>\cdots>\rho_{N_1}^{(1)}$. Combining this with $\rho_{0}^{(2)}=c \rho_{N_1}^{(1)}<\rho_{N_1}^{(1)}$ and \eqref{passing} proves \eqref{conclu}.
\end{proof}

\subsection{Proof of Theorem \ref{t1}}
For a given $\rho^{(1)}_0=\rho^0$, take $j^{(1)}=\psi_{N_1}(\rho^0)$. 
Then, according to Lemma \ref{lemmax},
\[
\rho^{(1)}_0 > \rho^{(1)}_1 > \cdots > \rho^{(1)}_{N_1}
=0.
\]
Lemma \ref{lemmax} further implies that given any $\varepsilon>0$, we can find a $j^{(1)}\in (\frac12\psi_{N_1}(\rho^0),\psi_{N_1}(\rho^0))$, such that sequence \eqref{problem} satisfies
\begin{equation}\label{ro}
\rho^0=\rho^{(1)}_0 > \rho^{(1)}_1 > \cdots > \rho^{(1)}_{N_1}
> 0; \qquad \rho^{(1)}_{N_1}<\varepsilon.
\end{equation}
Therefore, matching conditions \eqref{matching} imply
\[
\rho_0^{(2)}=(1-c)\rho_{N_1}^{(1)}<\varepsilon, \qquad J^{(2)}<R_0=c\rho_{N_1}^{(1)}<\varepsilon
\]
and
\[
j_0^{(2)}=j^{(1)}-J^{(2)}+\rho_0^{(2)}R_1>\frac12\psi_{N_1}(\rho^0)-\varepsilon,
\]
and from \eqref{problem} it follows that
\[
\rho_1^{(2)}<1-\frac{j_0^{(2)}}\varepsilon.
\]
Therefore, for a sufficiently small $\varepsilon$, we obtain 
\begin{equation}\label{basis}
\rho^{(2)}_{0}>0 > \rho^{(2)}_{1}, \qquad j^{(2)}_0>0.
\end{equation}

We now show by induction that if for some $k\ge 0$ and $A_k>0$,
\begin{equation}\label{induction}
j^{(1)}=A_k \quad \Rightarrow \quad 
\rho^{(1)}_0 > \cdots > \rho^{(1)}_{N_1} >
\rho^{(2)}_0 > \cdots > \rho^{(2)}_k > 0 > \rho^{(2)}_{k+1}; \quad j^{(2)}_k>0,
\end{equation}
then there is an $A_{k+1}\in (0,A_k)$ such that
\begin{equation}\label{induction'}
j^{(1)}=A_{k+1} \quad \Rightarrow \quad 
\rho^{(1)}_0 >  \cdots > \rho^{(1)}_{N_1}>\rho^{(2)}_0> \cdots > \rho^{(2)}_{k+1} > 0 > \rho^{(2)}_{k+2}; \quad 
j^{(2)}_{k+1}>0.
\end{equation}

Relations \eqref{ro}, \eqref{basis} establish the basis for the induction. 

For the induction step, assume that \eqref{induction} holds for a $k\ge 0$.
Let us consider how the values $\rho^{(1)}_n=\rho^{(1)}_n(j^{(1)})$ and 
$\rho_n^{(2)}=\rho^{(2)}_n(j^{(1)})$ change when $j^{(1)}$ decreases from 
the value $j^{(1)}=A_k$. By Lemma \ref{lemmax}, $\rho^{(1)}_{N_1}$ 
increases with decreasing $j^{(1)}$.
Hence, $\rho_0^{(2)}=(1-c)\rho_{N_1}^{(1)}$, $R_0=c\rho^{(1)}_{N_1} $ increase, and according to Remark 
\ref{rem2}, 
all $R_n=R_n\bigl(\psi_{N_2}(R_0),R_0\bigr)$ and all the differences $R_n-R_{n+2}$ with $n=0,\ldots,N_2-1$  also 
increase. Further, \eqref{AB} implies that 
\[
j^{(1)}=j^{(2)}_0 +(R_0-R_1)\rho^{(1)}_{N_1}+(1-\rho^{(1)}_{N_1})R_0=
j^{(2)}_0 +(R_0-R_1)\rho^{(1)}_{N_1}+c(1-\rho^{(1)}_{N_1})\rho^{(1)}_{N_1},
\]
where $R_0-R_1$, $\rho^{(1)}_{N_1}$ and $(1-\rho^{(1)}_{N_1})\rho^{(1)}_{N_1}$ increase with decreasing $j^{(1)}$ as long as $\rho^{(1)}_{N_1}<1/2$, and hence $j_0^{2}$ decreases.
Therefore, Lemma \ref{monotone}
implies that all $\rho^{(2)}_n$, $n=0,\ldots,k$
increase, and in particular 
$\rho^{(2)}_0,\ldots, \rho^{(2)}_k$ remain positive, while all $j_n^{(2)}$, 
$n=0,\ldots,k$  decrease,  as long as $j_k^{(2)}$ remains positive  with 
decreasing $j^{(1)}$.
Combining this with Lemma \ref{new}, we see that the ordering
\[
1/2\ge 
\rho^{(1)}_0 > \cdots > \rho^{(1)}_{N_1} >
\rho^{(2)}_0 > \cdots > \rho^{(2)}_k >0
\]
is preserved, and all $\rho^{(1)}_0,\ldots,\rho^{(1)}_{N_1}, \rho^{(2)}_0, \ldots, \rho^{(2)}_k  $ increase with decreasing $j^{(1)}$ as long as
the relations $\rho^{(2)}_k >\rho^{(2)}_{k+1}$, $j^{(2)}_k>0$ remain valid.
Now, note that $j^{(1)}>j^{(2)}_0\ge \cdots \ge j^{(2)}_k$ due to the second equation of \eqref{problem''} and \eqref{AB}.
Hence, using the continuous dependence of $j^{(2)}_k$ on $j^{(1)}$, we 
conclude that 
there is a value $\tilde A\in[0, A_k)$ such that  $j^{(2)}_k\to 0+$ as 
$j^{(1)}\to \tilde A+$.
Further,
\[
\rho^{(2)}_{k+1}=1-\frac{j_k^{(2)}}{\rho_k^{(2)}}\to 1 \quad {\rm as} \quad 
j^{(1)}\to \tilde A+
\]
because $\rho_k^{(2)}(\tilde A)>\rho_k^{(2)}(A_k)>0$.
Combining this with $\rho^{(2)}_{k+1}(A_k)<0$, which is one of the 
induction assumptions,
and using the continuity of $\rho^{(2)}_{k+1}(j^{(1)})$,
we see that there is an $\hat A\in (\tilde A,A_k)\subset(0,A_k)$
such that $\rho^{(2)}_{k+1}(\hat A)=0$ or, equivalently, $j^{(2)}_k(\hat 
A)=\rho^{(2)}_k(\hat A)$. Hence,
\begin{equation}\label{ut}
j^{(1)}=\hat A \quad \Rightarrow \quad 
\rho^{(2)}_0 > \cdots > \rho^{(2)}_k > 0 = \rho^{(2)}_{k+1}; \quad j_k^{(2)}=\rho_k^{(2)}>0; \quad 
j_{k+1}^{(2)}>0,
\end{equation}
where the last inequality follows from the relations 
$j_{k+1}^{(2)}=j_k^{(2)}-R_{k+1} \rho_k^{(2)}=(1-R_{k+1})\rho_k^{(2)}$ 
in which
$R_{k+1}\le R_0= c\rho^{(1)}_{N_1} \le c<1$.
Again, using the continuity and monotonicity argument, relations \eqref{ut}
imply
\begin{equation}\label{ut'}
\rho^{(2)}_{k+1}\to 0+,\qquad 
\rho^{(2)}_{k+2}=1-\frac{j^{(2)}_{k+1}}{\rho^{(2)}_{k+1}}\to -\infty \quad \
{\rm as} \quad \  j^{(1)}\to \hat A-.
\end{equation}
Thus, relations \eqref{ut}, \eqref{ut'} and Lemma \ref{new} imply \eqref{induction'} for 
$A_{k+1}=\hat A-\delta>0$ with a sufficiently small $\delta>0$, which 
completes the proof of the induction step.

Finally, the induction ensures the existence of a $j^{(1)}>0$ for which 
$j^{(2)}_{N_2+N_3-1}>0$, all $\rho^{(2)}_n$, $n=0,\ldots, N_2+N_3-1$ are 
positive, and $\rho^{(2)}_{N_2+N_3}<0$.
Repeating the argument that was already used above in the proof of the 
induction step, one can now decrease $j^{(1)}$
to a smaller positive value to achieve 
$\rho^{(1)}_0 > \cdots > \rho^{(1)}_{N_1} > \rho^{(2)}_0 > \cdots > 
\rho^{(2)}_{N_2+N_3} =0$. Relations $ R_0 > \cdots > R_{N_2}=0$
follow from Lemma \ref{lemmax}.
This completes the proof of Theorem \ref{t1}.
{}\hfill $\Box$

\subsection{Proof of Theorem \ref{t2}}
Let \eqref{solution} be a positive decreasing solution of problem
\eqref{problem}\,--\,\eqref{matching} with $\rho^0\in (0, 1/2)$.
First, let us show that
\begin{equation}\label{otsen}
j_m^{(2)}-\varphi(\rho_m^{(2)})= O(N_2^{-1}), \qquad 
0\le m\le N_2 + \frac{\ln N_2}{\ln q}.
\end{equation}
Recall that $x_m=O(N_2^\alpha)$ is equivalent
to $|x_m|\le C N_2^\alpha$ where $C$ is independent of $N_2$ for the given range $m$.

Using function \eqref{varphi}, formula \eqref{problem'} can be rewritten as
\begin{equation}\label{pr'}
\rho_{m+1}^{(2)}=\rho_m^{(2)}+\frac{\varphi(\rho_m^{(2)})-j_m^{(2)}}{\rho_m^{(2)}}.
\end{equation}
Introducing the notation
\[
y_m=j_m^{(2)}-\varphi(\rho_m^{(2)}),
\]
we see that 
\[
y_{m+1}-y_m=j_{m+1}^{(2)}-j_m^{(2)} + \varphi(\rho_m^{(2)})-\varphi\left(\rho_m^{(2)}-\frac{y_m}{\rho_m^{(2)}}\right)
\]
and using an intermediate value of the derivative of $\varphi$,
\[
y_{m+1}-y_m=j_{m+1}^{(2)}-j_m^{(2)} + \frac{y_m}{\rho_m^{(2)}}\, \varphi'\left(\rho_m^{(2)}-\frac{\theta_m y_m}{\rho_m^{(2)}}\right),
\]
where $\theta_m\in[0,1]$. Due to concavity of the function $\varphi$ and relations $\rho_m^{(2)}<\rho^0<1/2$, we have 
\[
\frac{1}{\rho_m^{(2)}}\, \varphi'\left(\rho_m^{(2)}-\frac{\theta_m y_m}{\rho_m^{(2)}}\right)\ge \frac{\varphi'(\rho_m^{(2)})}{\rho_m^{(2)}}=\frac{1-2\rho_m^{(2)}}{\rho_m^{(2)}}> \frac1{\rho^0}-2>0, 
\]
hence
\[
y_{m+1}\ge j_{m+1}^{(2)}-j_m^{(2)} + \frac{ y_m}q, \qquad q:=\frac{\rho^0}{1-\rho^0}<1.
\]
Consequently,
\[
y_m \le q^{N_2-m} y_{N_2}+\frac{q}{1-q} \max_{0\le m\le N_2} (j_m^{(2)}-j_{m+1}^{(2)}),
\]
where $y_{N_2}=j_{N_2}^{(2)}-\varphi(\rho_{N_2}^{(2)})\le j_{N_2}^{(2)}\le \rho_{N_2}^{(2)}<\rho^0$ and
according to \eqref{problem''},
\[
j_m^{(2)}-j_{m+1}^{(2)}=R_{m+1}\rho_m^{(2)}+(R_{m}-R_{m+2})\rho^{(2)}_{m+1}
\le 2R_0\rho^{(2)}_0=2 c(1-c)(\rho^{(1)}_{N_1})^2\le \frac{2c_0 (\rho^0)^2}{N_2}.
\]
Therefore,
\[
y_m \le q^{N_2-m} \rho^0+\frac{2 q c_0(\rho^0)^2}{(1-q)N_2}.
\]
In particular,
\[
y_m \le \frac{\rho^0}{N_2} \left(1+\frac{2 q c_0 \rho^0}{1-q}\right),  \qquad
0\le m\le N_2 + \frac{\ln N_2}{\ln q},
\]
where 
$y_m=j_m^{(2)}-\varphi(\rho_m^{(2)})=\rho_m^{(2)}(1-\rho_{m+1}^{(2)})-
\rho_m^{(2)}(1-\rho_m^{(2)})\ge 0$. This proves \eqref{otsen}.

Next, we obtain estimates for the differences $R_m-R_{m+1}>0$ and $R_0-R_m>0$.
Relation \eqref{aa} implies that
\[
R_{m}-R_{m+1}=\frac{R_{N_2-1}-R_{N_2}}{\left(\frac1{R_0}-1\right)^{N_2-m-1}}\le {R_0}{\left(\frac{R_0}{1-R_0}\right)^{N_2-m-1}}.
\]
Therefore,
\[
R_{0}-R_{m}\le \frac{R_0}{1-\frac{R_0}{1-R_0}}{\left(\frac{R_0}{1-R_0}\right)^{N_2-m}}.
\]
Since $R_0=c \rho^{(1)}_{N_1}\le c \rho^0=c_0\rho^0/N_2$, it follows that
\begin{equation}\label{RR}
R_{m}-R_{m+1}=O(N_2^{m-N_2}), \qquad R_{0}-R_{m}=O(N_2^{m-N_2-1}).
\end{equation}

Now, let us consider the initial value problem
\begin{equation}\label{dif}
\frac{d \hat j}{\tau}=-c_0\rho^0 \hat\rho(\tau), \qquad \hat j =\varphi(\hat \rho)=\hat \rho(1-\hat\rho); \qquad\quad \hat j(0)=\varphi(\rho_0^{(2)})
\end{equation}
on the interval $0\le \tau\le 1$.
Its solution is given by
\begin{equation}\label{WW}
\hat j(\tau)=\varphi(\hat\rho(\tau)), \qquad \hat\rho(\tau)=-\frac12\, W\left(-2\rho_0^{(2)}e^{-2\rho_0^{(2)}-c_0\rho^0\tau}\right).
\end{equation}
Denote by $\varphi^{-1}$ the inverse of the restriction $\varphi: [0,1/2]\to \mathbb{R}_+$ of function \eqref{varphi} to its interval of monotonicity $[0,1/2]$. Set
\[
\hat j_m =\hat j(m h), \qquad \hat \rho_m =\hat \rho(mh)=\varphi^{-1}(\hat j_m), \qquad m=0,1,\ldots, N_2; \qquad h:=1/N_2.
\]
Since $\hat\rho(\tau)$ decreases, \eqref{dif} implies
\begin{equation}\label{ivt}
\hat j_{m+1}-\hat j_m=-h c_0\rho^0  (\chi_m \hat \rho_m +(1-\chi_m)\hat\rho_{m+1}) ,
\end{equation}
where $\chi_m\in[0,1]$ for each $m=0,\ldots,N_2-1$.
Let us
estimate the error
\[
e_m=j^{(2)}_m-\hat j_m.
\]
Subtracting \eqref{ivt} from \eqref{problem''} gives
\[
\begin{array}{rcl}
e_{m+1}-e_m &=& -R_{m+1}\rho_m^{(2)}-(R_{m}-R_{m+2})\rho^{(2)}_{m+1} +h c_0\rho^0  (\chi_m \hat \rho^m +(1-\chi_m)\hat\rho_{m+1}) \\
&=& (R_0-R_{m+1})\rho_m^{(2)}-(R_{m}-R_{m+2})\rho^{(2)}_{m+1} +h c_0\rho^0  (1-\chi_m)  (\hat\rho_{m+1}-\hat \rho_m)\\ &  + & (h c_0\rho^0-R_0 )\rho_m^{(2)} + h c_0\rho^0   (\hat \rho_m -\rho_m^{(2)}).
\end{array}
\]
Combining this with $h=1/N_2$ and \eqref{RR}, we obtain
\[
e_{m+1}-e_m=
O(N_2^{m+1-N_2})-h c_0\rho^0  (1-\chi_m)  (\hat \rho_m-\hat\rho_{m+1}) +  (h c_0\rho^0-R_0 )\rho_m^{(2)} + h c_0\rho^0   (\hat \rho_m -\rho_m^{(2)}).
\]
From
\[
0\le \hat \rho_m-\hat\rho_{m+1}=-h\frac{d\hat\rho}{d\tau}(\tau_m)=\frac{-h}{1-2\hat\rho(\tau_m)}\cdot\frac{d\hat j}{d\tau}(\tau_m)=h^2 c_0 \rho^0\frac{\hat\rho(\tau_m)}{1-2\hat\rho(\tau_m)},
\]
where $mh\le \tau_m\le (m+1)h$ and $\hat\rho(\tau_m)\le \hat\rho_m\le\hat\rho_0=\rho_0^{(2)}\le \rho^0$, it follows that
\[
0\le \hat \rho_m-\hat\rho_{m+1}\le \frac{h^2 c_0 (\rho^0)^2}{1-2\rho^0},
\]
hence
\begin{equation}\label{aaa}
e_{m+1}-e_m=
O(N_2^{m+1-N_2})+O(N_2^{-2}) +  (h c_0\rho^0-R_0 )\rho_m^{(2)} + h c_0\rho^0   (\hat \rho_m -\rho_m^{(2)}).
\end{equation}
According to matching conditions \eqref{matching},
\[
j^{(1)}=j^{(2)}_0 -\rho_0^{(2)}R_1+J^{(2)}=j^{(2)}_0 -\rho_0^{(2)}R_1+R_0(1-R_1),
\]
which together with \eqref{otsen} implies 
\begin{equation}\label{bb'}
j^{(1)}=j^{(2)}_0+O(N_2^{-1}), 
\end{equation}
hence
\[
j^{(1)}=\rho_0^{(1)}(1-\rho_1^{(1)})\ge \rho_0^{(1)}(1-\rho_0^{(1)})=\varphi(\rho^0)\ge \varphi(\rho_0^{(2)})=j^{(1)}+ O(N_2^{-1})
\]
and therefore
\begin{equation}\label{bb}
\varphi(\rho^0)=j^{(1)}+O(N_2^{-1}), \qquad \varphi(\rho_0^{(2)})=j^{(1)}+O(N_2^{-1}).
\end{equation}
Due to $0<j^{(1)}<\rho^0<1/2$, from these relations and the relations
\[
c_0\rho^0-N_2 R_0=c_0(\rho^0-\rho_{N_1}^{(1)})=c_0\left(\rho^0-\frac{\rho_0^{(2)}}{1-\frac{c_0}{N_2}}\right)=c_0\left(\rho^0-{\rho_0^{(2)}}\right)+O(N_2^{-1}),
\]
it follows that
$
c_0\rho^0-N_2 R_0=O(N_2^{-1}),
$
and \eqref{aaa} implies
\[
e_{m+1}-e_m=
\frac{c_0\rho^0   (\hat \rho_m -\rho_m^{(2)})}{N_2}+O(N_2^{-2})
\]
for $m\le N_2-3$. Combining this with $\hat\rho_m=\varphi^{-1}(\hat j_m)$ and \eqref{otsen} gives
\[
e_{m+1}-e_m=
c_0\rho^0\,\frac{   \varphi^{-1}(\hat j_m)  - \varphi^{-1}(j_m^{(2)}+O(N_2^{-1}))}{N_2}+O(N_2^{-2})
\]
for $0\le m\le N_2 + \frac{\ln N_2}{\ln q}$, and due to
\[
\frac{d\varphi^{-1}}{dj}(j)=\frac1{1-2\varphi^{-1}(j)}, \qquad 0<\hat\rho_m, \rho_m^{(2)}\le \rho_0^{(2)}<\rho^0,
\]
we obtain
\[
|e_{m+1}-e_m|\le 
\frac{c_0\rho^0|\hat j_m-j_m^{(2)}|}{(1-2\rho^0)N_2}+O(N_2^{-2})=
\frac{c_0\rho^0|e_m|}{(1-2\rho^0)N_2}+O(N_2^{-2}).
\]
By discrete Gr\"onwall's inequality, this implies
\[
|e_m|\le e^\frac{\kappa m}{N_2}|e_0|+\frac{N_2 (e^\frac{\kappa m}{N_2}-1)}{\kappa}\cdot O(N_2^{-2})=e^\frac{\kappa m}{N_2} |e_0|+O(N_2^{-1}), \qquad \kappa:=\frac{c_0\rho^0}{1-2\rho^0},
\]
with $e_0=j_0^{(2)}-\hat j_0=j_0^{(2)}-\varphi(\rho_0^{(2)})=O(N_2^{-1})$, where the last equality follows from \eqref{otsen}, hence
\begin{equation}\label{e1}
e_m=O(N_2^{-1}) \quad \text{for all} \quad 0\le m\le N_2 + \frac{\ln N_2}{\ln q}.
\end{equation}
Finally, relations \eqref{problem''} and \eqref{dif} ensure that
$j_{m}^{(2)}-j_{m+1}^{(2)}=O(N_2^{-1})$ and $\hat j_{m}-\hat j_{m+1}=O(N_2^{-1})$, hence
\begin{equation}\label{e2}
e_m=O(N_2^{-1}\ln N_2) \quad \text{for all} \quad 0\le m\le N_2  
\end{equation}
Combining \eqref{e1}, \eqref{e2} with relations \eqref{otsen}, \eqref{WW}, \eqref{bb'}, \eqref{bb} and $\rho^0=\rho_0^{(1)}>\cdots>\rho_{N_1}^{(1)}>\rho_0^{(2)}$, we obtain \eqref{lim}\,--\,\eqref{fj3sq}. This completes the proof.

\section{Conclusion}
We proposed a deterministic model for the translation of an uORF with leaky scanning initiation. The model assumes that the collision of elongating and scanning ribosomes can result in detachment of the latter from the mRNA. We showed that the dependence of the flow of scanning ribosomes downstream of the uORF on the density of scanning ribosomes upstream of the uORF for the stationary solution is non-monotone provided that the uORF is sufficiently long and the fraction of the initiated ribosomes is sufficiently small. In particular, reducing the ribosome flow upstream of the uORF can increase the flow downstream. Further, we obtained an explicit formula for the flow downstream of the uORF in the limit when the length $N$ of the uORF increases and the initiation efficiency decreases as $1/N$ (see \eqref{varphi}, \eqref{W'} and \eqref{fj3sq}). These analytical results agree with earlier numerical findings obtained by TASEP modeling under similar assumptions \citep{40}. We showed numerically that the downstream flow predicted by the deterministic model and the TASEP model can be relatively close to our analytical solution for realistic values of $N$. Therefore, our model provides a plausible explanation for the experimentally confirmed ability of a single uORF to upregulate the downstream ORF translation when the upstream flow of ribosomes is reduced due to stress. We associate the mechanism of upregulation with the above stated assumption that some scanning ribosomes dissociate from the uORF before reaching the stop codon as a result of interaction with the elongating ribosomes. Our results predict that stress resistance can be achieved by translation of relatively long uORFs that do not favor high levels of translation initiation. In general these findings are relevant not only to IRS, but to any changes in intracellular conditions that affect the flow of scanning ribosomes or parameters of uORF translation, e.g., induction of a ribosome pause at uORF or uORF initiation rate. \vskip 6pt

\section*{Acknowledgments}
The authors thank M. Arnold for useful discussion of the models and results. This work was supported by Russian Science Foundation (20-14-00121).

\vspace*{6pt}

\end{document}